\def\ls{\lesssim}
\def\gs{\gtrsim}
\def\fz{\infty}
\renewcommand{\r}{\right}
\newcommand{\lf}{\left}
\def\ls{\lesssim}
\def\gs{\gtrsim}
\def\paz{{\partial}}
\def\supp{{\mathop\mathrm{\,supp\,}}}
\def\rr{{\mathbb R}}
\def\rn{{{\rr}^n}}
\def\zz{{\mathbb Z}}
\def\nn{{\mathbb N}}
\newcommand{\wz}{\widetilde}
\newcommand{\cb}{{\mathcal B}}
\newcommand{\ch}{{\mathcal H}}
\def\az{\alpha}
\def\dz{\delta}
\def\epz{\epsilon}
\def\bz{\beta}
\def\fai{\varphi}
\def\gz{{\gamma}}
\def\tz{\theta}
\def\wz{\widetilde}
\def\ls{\lesssim}
\def\gs{\gtrsim}
\def\ol{\overline}
\def\boz{\Omega}
\def\esup{\mathop\mathrm{\,ess\,sup\,}}
\def\hs{\hspace{0.3cm}}
\newtheorem{theorem}{Theorem}[section]
\newtheorem{lemma}[theorem]{Lemma}
\theoremstyle{definition}
\newtheorem{remark}[theorem]{Remark}
\newtheorem{definition}[theorem]{Definition}
\def\supp{{\mathop\mathrm{\,supp\,}}}
\def\loc{{\mathop\mathrm{loc}}}
\numberwithin{equation}{section}
\begin{document}

\arraycolsep=1pt

\title{\Large\bf
Global Boundedness of the Gradient for a Class of \\ Schr\"odinger Equations
\footnotetext{\hspace{-0.35cm} 2010 {\it Mathematics Subject
Classification}. {Primary 35J10; Secondary 35B65, 35J25, 35D30, 35B45.}
\endgraf{\it Key words and phrases}. Schr\"odinger equation,
Dirichlet problem, Neumann problem, boundedness of the gradient, isoperimetric inequality, semi-convex domain.
\endgraf  Sibei Yang is supported by the
National Natural Science Foundation  of China (Grant Nos. 11401276 and 11571289).}}
\author{Sibei Yang}
\date{ }
\maketitle

\vspace{-0.6cm}

\begin{center}
\begin{minipage}{13.5cm}\small
{{\bf Abstract.}
In this paper, via applying the method developed by A. Cianchi and V. Maz'ya,
the author obtains the global boundedness of the gradient for solutions to
Dirichlet and Neumann problems of a class of Schr\"odinger equations under the
minimal assumptions for integrability on the data and regularity on the boundary
of the domain. Moreover, the case of arbitrary bounded semi-convex
domains is also considered.}
\end{minipage}
\end{center}

\section{Introduction\label{s1}}

\hskip\parindent It is well known that the global regularity of solutions is a classical
topic in the theory of elliptic PDEs. In particular,
the study for the global boundedness of the gradient, equivalently, the Lipschitz continuity,
of the solutions to some elliptic boundary value problems
has attracted great interests in recent years
(see, for example, \cite{acmm10,c92,cm15,cm14a,cm14b,cm11,d83,m09a,m09b}).
This topic can be traced back to the work in \cite{m69,t76,t79}.

Via a relative isoperimetric inequality (see, for example, \cite[(5.1)]{cm14a} or \eqref{2.1} below),
the Hardy-Littlewood inequality for rearrangements (see, for example, \eqref{1.11} below)
and a differential inequality involving integrals over the level sets of
Sobolev functions established by Maz'ya \cite{m69},
Cianchi and Maz'ya \cite{cm14a,cm11} studied the global boundedness of the gradient for a class
of quasi-linear elliptic equations or systems under the weakest integrability conditions
on the prescribed date and the minimal regularity assumptions on the domains.

In this paper, motivated by the work in \cite{cm14a,cm11,m09a,m09b},
via applying the method developed by
Cianchi and Maz'ya \cite{cm14a,cm11} and some estimates obtained by Shen \cite{sh95,sh94},
we study the global boundedness of the gradient
for a class of Schr\"odinger equations with the Dirichlet or Neumann boundary condition
under the minimal assumptions for the integrability
on the prescribed dates and the regularity on the domains.
Furthermore, the case of arbitrary bounded semi-convex domains is also included.

To state Schr\"odinger equations considered in this paper,
we first recall the definition of the reverse H\"older class
(see, for example, \cite{g73,sh95}).
Recall that a non-negative function $w$ on $\rn$ is said to belong to the \emph{reverse
H\"older class} $RH_{q}(\mathbb{R}^n)$
with $q\in(1,\fz]$, denoted by $w\in RH_q(\rn)$ if, when
$q\in(1,\fz)$, $w\in L^q_{\loc}(\rn)$ and
\begin{equation*}
\sup_{B\subset\rn}\lf\{\frac{1}{|B|}\int_{B}[w(x)]^q\,dx\r\}^{1/q}\lf\{
\frac{1}{|B|}\int_{B}w(x)\,dx\r\}^{-1}<\fz
\end{equation*}
or, when $q=\fz$, $w\in L^\fz_{\loc}(\rn)$ and
\begin{equation*}
\sup_{B\subset\rn}\lf\{\esup_{x\in B}w(x)\r\}
\lf\{\frac{1}{|B|}\int_{B}w(x)\,dx\r\}^{-1}<\fz,
\end{equation*}
where the suprema are taken over all balls $B\subset\rn$.
A typical example of the reverse H\"older class
is a non-negative polynomial on $\rn$, which turns out to be in $RH_\fz(\rn)$
(see, for example, \cite{sh95}).
It is known from \cite{g73} that $RH_q(\rn)$ has the property of self-improvement.
Namely, if $V\in RH_q(\rn)$ with some $q\in(1,\fz)$, then there exists
$\epz\in(0,\fz)$ such that $V\in RH_{q+\epz}(\rn)$.
Thus, for any $V\in RH_q(\rn)$ with $q\in(1,\fz]$,
the \emph{critical index} $q_+$ for $V$ is defined as follows:
\begin{equation}\label{1.1}
q_+:=\sup\lf\{q\in(1,\fz]:\ V\in RH_q(\rn)\r\}.
\end{equation}

Let $n\ge3$ and $\boz$ be a bounded domain in $\rn$.
Denote by $W^{1,\,2}(\boz)$ and $W^{1,\,2}_0(\boz)$ be the classical \emph{Sobolev space}
on $\boz$ and the closure of $C^{\fz}_c (\boz)$ in
$W^{1,\,2}(\boz)$, respectively, where $C^{\fz}_c (\boz)$ denotes the \emph{set of
all $C^\fz$ functions on $\rn$ with compact support contained in $\boz$}.
Assume that $0\le V\in RH_{q}(\rn)$ for some $q\in[n,\fz]$ and $V\not\equiv0$ on $\boz$.
Let $f\in L^2(\boz)$. Then $u\in W^{1,2}_0(\boz)$
is said to be a \emph{weak solution} to the Dirichlet boundary problem
\begin{equation}\label{1.2}
\lf\{\begin{array}{l}
-\Delta u+Vu=f\ \text{in}\ \boz,\\
u=0\ \ \ \ \ \ \ \ \ \ \ \ \ \ \text{on}\  \partial\boz,
\end{array}\r.
\end{equation}
if for any $v\in W^{1,2}_0(\boz)$,
\begin{equation}\label{1.3}
\int_\boz\nabla u(x)\cdot\nabla v(x)\,dx+\int_\boz V(x)u(x)v(x)\,dx=\int_\boz f(x)v(x)\,dx.
\end{equation}

Assume further that $\boz$ is a bounded Lipschitz domain in $\rn$. Then $u\in W^{1,2}(\boz)$
is said to be a \emph{weak solution} of the Neumann boundary problem
\begin{equation}\label{1.4}
\lf\{\begin{array}{l}
-\Delta u+Vu=f\ \text{in}\ \boz,\\
\frac{\partial u}{\partial\nu}=0\ \ \ \ \ \ \ \ \ \ \ \ \ \text{on}\  \partial\boz,
\end{array}\r.
\end{equation}
where and in what follows, $\nu:=(\nu_1,\,\ldots,\,\nu_n)$ denotes the \emph{outward unit normal}
to $\partial\boz$, if for any $v\in W^{1,2}(\boz)$, \eqref{1.3} holds true.

\begin{remark}\label{r1.1}
Let $f\in L^2(\boz)$, $0\le V\in RH_n(\rn)$ and $V\not\equiv0$ on $\boz$.
Assume that $u$ is a weak solution of \eqref{1.2} or \eqref{1.4}.

(i) By \eqref{1.3}, we conclude that the weak solutions of
the Dirichlet problem \eqref{1.2} and the Neumann problem \eqref{1.4} are unique.

(ii) By Sobolev's inequality and the method of difference quotient,
similar to the classical regularity for second order elliptic equation
(see, for example, \cite[Theorem 8.8]{gt}),
we know that, for any $\boz'\subset\boz$ with $\overline{\boz'}\subset\boz$,
$u\in W^{2,2}(\boz')$ and there exists a positive constant $C$,
depending on $n$, $V$, $\boz$ and $\boz'$, such that
$$\|u\|_{W^{2,2}(\boz')}\le C(\|u\|_{W^{1,2}(\boz)}+\|f\|_{L^2(\boz)}).
$$

(iii) Assume further that $\partial\boz\in C^2$. Then similar to the global regularity
for second order elliptic equation (see, for example, \cite[Theorem 8.12]{gt}),
we conclude that $u\in W^{2,2}(\boz)$.
\end{remark}

Then the main results of this paper are as follows.

\begin{theorem}\label{t1.1}
Let $n\ge3$ and $\boz$ be a bounded domain in $\rn$ with $\partial\boz\in W^2L^{n-1,\,1}$.
Assume that $0\le V\in RH_n(\rn)$ and $V\not\equiv0$ on $\boz$, $f\in L^{n,\,1}(\boz)$
and $u$ is the unique weak solution to the Dirichlet
problem \eqref{1.2}. Then there exists a positive constant $C$, depending on $n$ and $\boz$, such that
\begin{equation}\label{1.5}
\|\nabla u\|_{L^\fz(\boz)}\le C\|f\|_{L^{n,\,1}(\boz)}.
\end{equation}
In particular, $u$ is Lipschitz continuous in $\boz$.
\end{theorem}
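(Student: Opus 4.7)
My plan is to reduce the Schr\"odinger boundary value problem to a Poisson problem, apply the Cianchi--Maz'ya gradient bound for the Dirichlet Laplacian, and then absorb the lower-order term $Vu$ using the Shen estimates for $-\Delta+V$.

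First I would rewrite \eqref{1.2} in the form
$$-\Delta u=f-Vu\ \text{in}\ \boz,\qquad u=0\ \text{on}\ \partial\boz,$$
and set $F:=f-Vu$. Provided one can show $F\in L^{n,\,1}(\boz)$ with $\|F\|_{L^{n,\,1}(\boz)}\ls\|f\|_{L^{n,\,1}(\boz)}$, the result of Cianchi and Maz'ya \cite{cm14a} for the Dirichlet problem of the Laplacian on domains with $\partial\boz\in W^2L^{n-1,\,1}$ gives
$$\|\nabla u\|_{L^\fz(\boz)}\ls\|F\|_{L^{n,\,1}(\boz)}\ls\|f\|_{L^{n,\,1}(\boz)}+\|Vu\|_{L^{n,\,1}(\boz)},$$
so the entire argument reduces to controlling $\|Vu\|_{L^{n,\,1}(\boz)}$ by $\|f\|_{L^{n,\,1}(\boz)}$.

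To bound $\|Vu\|_{L^{n,\,1}(\boz)}$ I would exploit the self-improvement of the reverse H\"older class to obtain $V\in RH_{n+\epz}(\rn)$ for some $\epz\in(0,\fz)$, and then invoke the $L^p$ theory of Shen \cite{sh94,sh95} for the Schr\"odinger operator $-\Delta+V$ (transferred to the bounded domain with Dirichlet data via localization, reflection across a locally flattened boundary, and the pointwise bounds for the Schr\"odinger Green function). This should yield a Calder\'on--Zygmund type estimate
$$\|Vu\|_{L^p(\boz)}\ls\|f\|_{L^p(\boz)}$$
for $p$ in an open interval containing $n$ as an interior point. Marcinkiewicz interpolation between two such endpoints $p_0<n<p_1$ then upgrades this to the desired Lorentz bound
$$\|Vu\|_{L^{n,\,1}(\boz)}\ls\|f\|_{L^{n,\,1}(\boz)},$$
and chaining the inequalities delivers \eqref{1.5}; Lipschitz continuity of $u$ follows from $u\in W^{1,\,\fz}(\boz)\cap W^{1,\,2}_0(\boz)$.

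The main obstacle is the Lorentz-scale estimate on $Vu$. Shen's original theorems are stated on $\rn$ in the $L^p$ scale, whereas here one needs an analogue on a bounded domain whose boundary is only in $W^2L^{n-1,\,1}$, and in the limiting space $L^{n,\,1}$ sitting strictly below $L^n$. This forces one to (i) push the Shen range of $p$ strictly past $n$ by exploiting the improved reverse H\"older exponent $n+\epz$, and (ii) carry out a careful localization near $\partial\boz$ that keeps the dependence on the domain compatible with the weak geometric assumption $\partial\boz\in W^2L^{n-1,\,1}$ and with the Cianchi--Maz'ya bound used in the Poisson step. Both steps rest on the sharp pointwise estimates for the Schr\"odinger Green function from \cite{sh95}, and it is this interaction between low boundary regularity and the reverse H\"older potential that is the principal technical hurdle.
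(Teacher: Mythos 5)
Your proposal follows essentially the same route as the paper: the paper likewise treats the equation as $-\Delta u=f-Vu$ with zero Dirichlet data, establishes $\|\nabla u\|_{L^\infty(\Omega)}\lesssim\|f-Vu\|_{L^{n,1}(\Omega)}$ by running the Cianchi--Maz'ya level-set/isoperimetric argument (re-proved rather than black-boxed, mainly because the smooth approximation of $u$ and of $\partial\Omega$ must be redone with the $Vu$ term present), and then proves $\|Vu\|_{L^{n,1}(\Omega)}\lesssim\|f\|_{L^{n,1}(\Omega)}$ exactly as you suggest, via Shen's $L^p$ bound for $p\in(1,q_+)$ with $q_+>n$ from self-improvement, followed by off-diagonal Marcinkiewicz interpolation in Lorentz spaces. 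The one place where the paper is simpler than your sketch is the transfer of Shen's whole-space estimate to the bounded domain: no localization or reflection is needed, only the maximum-principle domination $0\le G_\Omega(x,y)\le\Gamma(x,y)$ of the Dirichlet Green function by the fundamental solution, which in particular sidesteps any interaction with the low boundary regularity in that step.
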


An argument similar to Theorem \ref{t1.1} yields that the conclusion of Theorem \ref{t1.1}
also holds true in the case of bounded semi-convex domains 
(see Definition \ref{d1.1} below for the definition of semi-convex domains).

\begin{theorem}\label{t1.2}
The same conclusion as in Theorem \ref{t1.1} holds true if $\boz$ is a bounded
semi-convex domain in $\rn$ with $n\ge3$.
\end{theorem}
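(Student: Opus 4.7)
The plan is to reduce Theorem \ref{t1.2} to Theorem \ref{t1.1} by a suitable approximation of $\boz$ by smooth domains whose relevant geometric data are uniformly controlled. Since $\boz$ is semi-convex, I would first construct a sequence of bounded $C^2$ domains $\{\boz_k\}_{k\in\nn}$ with either $\overline{\boz}\subset\boz_k$ or $\boz_k\subset\boz$ (either inclusion should be workable) such that $\boz_k\to\boz$ in the Hausdorff sense as $k\to\fz$, and such that $\partial\boz_k$ admits a $W^2L^{n-1,1}$-type bound depending only on the semi-convexity constants of $\partial\boz$ and on $\diam(\boz)$. The natural construction here is to mollify the signed distance function $d(\cdot,\partial\boz)$: semi-convexity of $\boz$ produces a one-sided lower bound on the distributional Hessian of $d$ in a tubular neighborhood of $\partial\boz$, and this one-sided control is preserved under mollification, so that the smooth level sets $\{d=\pm\epz\}$ inherit uniformly bounded principal curvatures on the appropriate side.

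Second, I would apply Theorem \ref{t1.1} on each $\boz_k$. Letting $u_k\in W^{1,2}_0(\boz_k)$ denote the unique weak solution of $-\Delta u_k+Vu_k=f$ (with $f$ extended by $0$ outside $\boz$ when $\boz_k\supset\boz$, or restricted when $\boz_k\subset\boz$), Theorem \ref{t1.1} delivers
\begin{equation*}
\|\nabla u_k\|_{L^\fz(\boz_k)}\ls \|f\|_{L^{n,1}(\boz)},
\end{equation*}
with implicit constant depending on $n$, the reverse-H\"older data of $V$, and the $W^2L^{n-1,1}$-type norm of $\partial\boz_k$. By the first step the latter is uniformly bounded in $k$, and so the constant is uniform as well. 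Combined with the routine energy bound $\|u_k\|_{W^{1,2}(\boz_k)}\ls\|f\|_{L^2(\boz)}$, a standard compactness argument supplies a subsequence converging weakly in $W^{1,2}$ and weak-$\ast$ in $W^{1,\fz}$ to a limit $u^\ast$ that solves \eqref{1.2} on $\boz$ in the weak sense. Uniqueness (Remark \ref{r1.1}(i)) identifies $u^\ast=u$, and lower semicontinuity of the $L^\fz$ norm under weak-$\ast$ convergence yields \eqref{1.5}.

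The principal obstacle will be the first step: one must verify not only that the mollified distance function yields smooth approximating domains with uniform second-order control of $\partial\boz_k$, but also, more delicately, that the constant furnished by Theorem \ref{t1.1} depends on the boundary solely through the quantitative $W^2L^{n-1,1}$-type data and not through any finer geometric invariant. Confirming this requires a careful inspection of the proof of Theorem \ref{t1.1}, in particular of the use there of the relative isoperimetric inequality and of Shen's estimates from \cite{sh95,sh94}, to ensure that each step is stable under the proposed $C^2$-approximation. Once this dependence is made explicit, the approximation scheme goes through and Theorem \ref{t1.2} follows.
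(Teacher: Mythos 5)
There is a genuine gap, and it sits exactly where you flagged your ``principal obstacle'': the uniform $W^2L^{n-1,1}$ control of the approximating boundaries that your reduction needs is simply not available for a general semi-convex domain. Semi-convexity is equivalent (Remark \ref{r1.3}(ii)) to Lipschitz plus a uniform \emph{exterior} ball condition; it gives a \emph{one-sided} lower bound on the Weingarten matrix (Lemma \ref{l2.7}) and nothing more. A convex polytope is semi-convex, and any $C^2$ approximation $\boz_k$ of it at scale $\epz_k$ near an edge has principal curvatures of size $\epz_k^{-1}$ on a boundary set of $\ch^{n-1}$-measure $\sim\epz_k$, so $\int_{\partial\boz_k}|\mathrm{tr}\cb_k|^{n-1}\,d\ch^{n-1}\sim\epz_k^{-(n-2)}\to\fz$ for $n\ge3$; hence $\|\mathrm{tr}\cb_k\|_{L^{n-1,1}(\partial\boz_k)}$ blows up and the constant furnished by Theorem \ref{t1.1} on $\boz_k$ is not uniform in $k$. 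Your own phrase ``uniformly bounded principal curvatures \emph{on the appropriate side}'' is accurate, but a one-sided bound does not control $|\mathrm{tr}\cb_k|$, which is what Theorem \ref{t1.1}'s hypothesis and constant genuinely depend on. So the black-box reduction to Theorem \ref{t1.1} cannot be made to work.

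What saves the semi-convex case is a sign observation inside the proof, not outside it: in the boundary term one only needs a lower bound for $-\mathrm{tr}\cb\,(\partial u/\partial\nu)^2$ (see \eqref{2.15}--\eqref{2.16}), i.e.\ an upper bound on $\mathrm{tr}\cb$, and that is exactly what the UEBC supplies via Lemma \ref{l2.7}. This is why Lemma \ref{l2.8} has the separate estimates \eqref{2.9} and \eqref{2.10} for semi-convex domains, in which the curvature integral is replaced by $C\,\ch^{n-1}(\partial\boz\cap\partial\{|\nabla u|>t\})$ with $C$ depending only on the Lipschitz character and the uniform ball constant. The paper's proof of Theorem \ref{t1.2} therefore reruns the entire proof of Theorem \ref{t1.1} with \eqref{2.7}--\eqref{2.8} replaced by \eqref{2.9}--\eqref{2.10}, and in the final approximation step uses Lemma \ref{l3.2}, which produces smooth semi-convex $\boz_m\supset\overline{\boz}$ with \emph{uniform Lipschitz character and uniform ball constant} (rather than uniform $W^2L^{n-1,1}$ data). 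Your convergence/compactness argument in the second step is fine once the uniform gradient bound is secured, but securing it requires re-entering the proof of Theorem \ref{t1.1} at the level of the level-set differential inequality, not citing it as a black box.
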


\begin{theorem}\label{t1.3}
Let $n\ge3$ and $\boz$ be a bounded domain in $\rn$ with $\partial\boz\in W^2L^{n-1,\,1}$.
Assume that $0\le V\in RH_n(\rn)$ and $V\not\equiv0$ on $\boz$, $f\in L^{n,\,1}(\boz)$
and $u$ is the unique weak solution to the Neumann
problem \eqref{1.4}. Then there exists a positive constant $C$, depending on $n$ and $\boz$, such that
\begin{equation}\label{1.6}
\|\nabla u\|_{L^\fz(\boz)}\le C\|f\|_{L^{n,\,1}(\boz)}.
\end{equation}
In particular, $u$ is Lipschitz continuous in $\boz$.
\end{theorem}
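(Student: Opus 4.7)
The plan is to mirror the strategy used for the Dirichlet problem in Theorem \ref{t1.1}, adapted to the Neumann setting: reduce \eqref{1.4} to a Neumann problem for the pure Laplacian with a controlled right-hand side, and then invoke the Cianchi--Maz'ya gradient bound in conjunction with Shen's estimates for the Schr\"odinger operator $-\Delta+V$. First, I would rewrite \eqref{1.4} as the Neumann problem
\begin{equation*}
-\Delta u=g\ \text{in}\ \boz,\quad \frac{\paz u}{\paz\nu}=0\ \text{on}\ \paz\boz,\quad g:=f-Vu.
\end{equation*}
Testing the weak formulation \eqref{1.3} against $v\equiv 1$ yields $\int_\boz g\,dx=0$, so the compatibility condition for the pure Neumann Laplacian is automatic. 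Uniqueness of $u$ is already provided by Remark \ref{r1.1}(i).

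The key auxiliary estimate to establish is
\begin{equation*}
\|Vu\|_{L^{n,1}(\boz)}\ls\|f\|_{L^{n,1}(\boz)},
\end{equation*}
which then gives $g\in L^{n,1}(\boz)$ with $\|g\|_{L^{n,1}(\boz)}\ls\|f\|_{L^{n,1}(\boz)}$. To prove it, I would draw on Shen's pointwise bounds for the Green's function of $-\Delta+V$ from \cite{sh95,sh94}, namely $|G_V(x,y)|\ls|x-y|^{2-n}$ for $V\in RH_n(\rn)$, together with the self-improvement of the reverse H\"older class, which yields some $\epz\in(0,\fz)$ with $V\in RH_{n+\epz}(\rn)$. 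These produce $L^p$ boundedness of $V(-\Delta+V)^{-1}$ in a neighbourhood of $p=n$, and real interpolation on the Lorentz scale then delivers the endpoint $L^{n,1}$ bound. The main obstacle is transferring Shen's whole-space estimates to the Neumann problem on $\boz$: under the boundary regularity $\paz\boz\in W^2L^{n-1,1}$, this would be carried out by constructing the Neumann Green's function for $-\Delta+V$ on $\boz$ and comparing it to the corresponding whole-space fundamental solution, using that the boundary is Lipschitz with second-order control good enough for the relevant boundary-layer estimates.

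With $\|g\|_{L^{n,1}(\boz)}\ls\|f\|_{L^{n,1}(\boz)}$ secured, I would conclude by invoking the Cianchi--Maz'ya gradient bound for the Neumann Laplacian on $\boz$, namely the $V\equiv 0$ analogue of the present statement, proved in \cite{cm14a,cm11}. This bound rests on the relative isoperimetric inequality \cite[(5.1)]{cm14a}, the Hardy--Littlewood rearrangement inequality \eqref{1.11}, and Maz'ya's differential inequality on the level sets of $u$, and yields
\begin{equation*}
\|\nabla u\|_{L^\fz(\boz)}\ls\|g\|_{L^{n,1}(\boz)}\ls\|f\|_{L^{n,1}(\boz)},
\end{equation*}
which is the desired \eqref{1.6}. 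The Lipschitz continuity of $u$ on $\overline{\boz}$ then follows from the $L^\fz$ bound on $\nabla u$ together with the quasi-convexity of $\boz$ afforded by the Lipschitz regularity of $\paz\boz$.
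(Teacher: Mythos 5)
Your overall strategy coincides with the paper's: both reduce the problem to the estimate $\|Vu\|_{L^{n,1}(\boz)}\ls\|f\|_{L^{n,1}(\boz)}$, obtained from Shen's bounds and interpolation on the Lorentz scale (Lemma \ref{l3.1}), and then treat $u$ as the solution of the Neumann problem for $-\Delta$ with datum $g=f-Vu$. The only structural difference is that you invoke the $V\equiv0$ Neumann gradient bound of \cite{cm14a,cm11} as a black box, whereas the paper re-runs the rearrangement machinery (the isoperimetric inequality \eqref{2.1}, the Hardy--Littlewood inequality \eqref{1.11}, Maz'ya's differential inequality \eqref{2.6}, Lemmas \ref{l2.2}--\ref{l2.4}, and the Neumann boundary identity \eqref{2.27} with $\cb(\nabla_Tu,\nabla_Tu)$ replacing $\mathrm{tr}\cb\,(\partial u/\partial\nu)^2$) with $\Delta u$ replaced by $Vu-f$. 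This re-derivation is what lets the paper handle the smooth approximations $u_k$ and the final approximation of $\boz$ by smooth domains in Step 6, but your citation is legitimate in substance, and your observation that the compatibility condition $\int_\boz(f-Vu)\,dx=0$ is automatic is correct.

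The one step where your plan, as written, would not go through is the transfer of Shen's whole-space estimate to the Neumann problem. For the Dirichlet problem the paper uses the maximum-principle domination $0\le G_\boz(x,y)\le\Gamma(x,y)$ (see \eqref{3.22}); no analogous pointwise comparison of the Neumann function with the whole-space fundamental solution is available, and obtaining one ``by boundary-layer estimates'' is not how the argument runs. What is actually used is Shen's direct pointwise decay estimate for the Neumann function on a bounded Lipschitz domain, $|N(x,y)|\le C_k[1+|x-y|m(x,V)]^{-k}|x-y|^{2-n}$ (\cite[Lemma 1.21]{sh94}, quoted as Lemma \ref{l3.3}); one then extends $N$ and $g$ by zero and repeats the proof of \cite[Theorem 3.1]{sh95} to get $\|VL_\boz^{-1}g\|_{L^p(\boz)}\ls\|g\|_{L^p(\boz)}$ for $p\in[1,q_+)$ with $q_+>n$, after which Lemma \ref{l3.1} gives the $L^{n,1}$ bound. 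Since the needed kernel estimate is available in the cited literature, this is a repairable soft spot rather than a fatal gap, but the comparison argument you propose should be replaced by Shen's Neumann-function estimate.
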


Similarly, the conclusion of Theorem \ref{t1.3} also holds true in the case of bounded
semi-convex domains.

\begin{theorem}\label{t1.4}
The same conclusion as in Theorem \ref{t1.3} holds true if $\boz$ is a
bounded semi-convex domain in $\rn$ with $n\ge3$.
\end{theorem}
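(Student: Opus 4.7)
The plan is to reduce Theorem \ref{t1.4} to Theorem \ref{t1.3} by approximating the semi-convex domain $\boz$ by a sequence $\{\boz_k\}_{k \in \nn}$ of smooth domains whose boundaries satisfy $\partial\boz_k \in C^2 \subset W^2 L^{n-1,1}$ and whose second fundamental forms are uniformly bounded from below by a constant depending only on the semi-convexity constant of $\boz$. Such an approximation is available since semi-convexity can be encoded as a lower bound on the distributional Hessian of a signed distance function to $\partial\boz$, a property preserved under standard mollification or under taking level sets of a smoothed distance function.

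For each $k$, Theorem \ref{t1.3} applied to $\boz_k$ provides the unique weak solution $u_k \in W^{1,2}(\boz_k)$ of the Neumann problem with right-hand side (an extension of) $f$, and yields
\[
\|\nabla u_k\|_{L^\fz(\boz_k)} \le C_k \|f\|_{L^{n,\,1}(\boz_k)}.
\]
The heart of the matter is verifying that $C_k$ can be chosen independent of $k$. Inspecting the Cianchi--Maz'ya-type argument used to prove Theorem \ref{t1.3}, the constant depends only on (i) the relative isoperimetric constant of $\boz_k$, (ii) Shen's estimates associated with $V \in RH_n(\rn)$, and (iii) a quantitative lower bound on the second fundamental form of $\partial\boz_k$, which appears as a boundary term after an integration by parts in a differential inequality satisfied by the level sets of $|\nabla u_k|$. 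Items (i) and (iii) are controlled by the uniform semi-convexity of the $\boz_k$'s, while (ii) is intrinsic to $V$ and insensitive to the boundary.

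Once the uniform bound on $\{\|\nabla u_k\|_{L^\fz(\boz_k)}\}_{k}$ is in hand, standard energy estimates together with weak $W^{1,2}$-compactness produce a limit, which by Remark \ref{r1.1}(i) and the uniqueness of the Neumann solution must coincide with $u$ on $\boz$; lower semicontinuity of $\|\nabla \cdot\|_{L^\fz}$ under weak-$*$ convergence then transfers the bound and gives \eqref{1.6}, whence the Lipschitz continuity of $u$. The main obstacle is step (iii): carrying out the approximation so that the lower bound on $II_{\partial\boz_k}$ does not degenerate with $k$. This is precisely where the semi-convexity hypothesis is essential and replaces the $W^2 L^{n-1,1}$ regularity used in Theorem \ref{t1.3}; the rest of the argument is a verbatim repetition of the proof of Theorem \ref{t1.3}.
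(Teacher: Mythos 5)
Your proposal follows essentially the same route as the paper: approximate $\boz$ by smooth semi-convex domains with uniform Lipschitz character and uniform exterior ball constant (this is the paper's Lemma \ref{l3.2}), obtain a gradient bound on each approximating domain with a constant depending only on these uniform quantities, and pass to the limit using a.e.\ convergence of the gradients and uniqueness of the Neumann solution.

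One point in your reduction needs care. The constant in Theorem \ref{t1.3}, as actually proved, depends on $\||\wz{\cb}|\|_{L^{n-1,1}(\partial\boz)}$, which is a two-sided quantity: semi-convexity (equivalently, the uniform exterior ball condition) controls the second fundamental form only from one side (Lemma \ref{l2.7}), and the approximating boundaries may have arbitrarily large curvature in the unconstrained direction, so $\||\wz{\cb}_k|\|_{L^{n-1,1}(\partial\boz_k)}$ need not stay bounded along the sequence. Hence you cannot literally quote Theorem \ref{t1.3} on $\boz_k$ and then claim that $C_k$ depends only on a one-sided curvature bound. The correct version of your item (iii) --- and the paper's actual fix --- is to replace the level-set inequalities \eqref{2.23}--\eqref{2.24} in the proof of Theorem \ref{t1.3} by their semi-convex counterparts \eqref{2.25}--\eqref{2.26}: there the boundary term $-\cb(\nabla_T u,\nabla_T u)$ arising from the integration by parts is estimated using only the one-sided pointwise bound $\wz{\cb}\le C$ furnished by Lemma \ref{l2.7}, leaving a term $C\,\ch^{n-1}(\partial\boz\cap\partial\{|\nabla u|>t\})$ whose constant depends only on the Lipschitz character and the uniform ball constant, both of which are uniform along the approximating sequence of Lemma \ref{l3.2}. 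With that substitution the remainder of your argument coincides with the paper's.
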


We prove Theorems \ref{t1.1}--\ref{t1.4} by using the method developed
by A. Cianchi and V. Maz'ya \cite{cm14a,cm11}.
More precisely, via using a relative isoperimetric inequality
(see, for example, \eqref{2.1} below), the coarea formula,
the Hardy-Littlewood inequality for rearrangements
(see, for example, \eqref{1.11} below),
a differential inequality involving integrals over the level sets of
Sobolev functions established by Maz'ya \cite{m69}
(see also Lemma \ref{l2.1} below) and two boundedness estimates for
Schr\"odinger equations obtained by Shen \cite{sh95,sh94},
we show Theorems \ref{t1.1}--\ref{t1.4}.

\begin{remark}\label{r1.2}
Let $n\ge3$ and $0\le V\in RH_n(\rn)$ with $V\not\equiv0$ on $\boz$.

(i) $\partial\boz\in W^2L^{n-1,\,1}$ means that $\boz$ is locally
the subgraph of a function of $n-1$ variables whose second order
derivatives belong to the Lorentz space $L^{n-1,1}$. It is worth pointing out that
$\partial\boz\in W^2L^{n-1,\,1}$ is the weakest possible integrability condition
on second order derivatives for the first order derivatives to be continuous, and hence
$\partial\boz\in C^{1,0}$ (see, for example, \cite{cp98}).

(ii) The assumption $f\in L^{n,\,1}(\boz)$ in Theorems \ref{t1.1}---\ref{t1.4} is
sharp. We explain this in the case of Dirichlet boundary problems.
Let $B_0:=B(0,1)$ be the unit ball centered at $0$.
The Poisson equation on $B_0$ with the Dirichlet boundary condition is as follows:
\begin{equation*}
\lf\{\begin{array}{l}
-\Delta v=f\ \text{in}\ B_0,\\
v=0\ \ \ \ \ \ \text{on}\  \partial B_0.
\end{array}\r.
\end{equation*}
It was proved in \cite[p.\,512]{c92} that $\nabla v\in L^\fz(B_0)$
if and only if $f\in L^{n,1}(B_0)$. Let $c_0$ be a positive constant.
Then $c_0\in RH_n(\rn)$. For the Schr\"odinger equation $-\Delta u+c_0u=f$
on $B_0$ with the Dirichlet boundary condition, we see that, when $c_0$ is small enough,
$\nabla u\in L^\fz(B_0)$ if and only if $f\in L^{n,1}(B_0)$.
\end{remark}

In the remainder of this paper, we recall the definitions
of the semi-convex domain in $\rn$ and Lorentz(-Sobolev) space.

\begin{definition}\label{d1.1}
(i) Let $\boz$ be an open set in $\rn$. The collection of
\emph{semi-convex functions} on $\boz$ consists of continuous functions
$u:\ \boz\rightarrow\rr$ with the property that there exists a positive
constant $C$ such that, for all $x,\,h\in\rn$ with the ball
$B(x,|h|)\subset \boz$,
$$2u(x)-u(x+h)-u(x-h)\le C|h|^2.
$$
The best constant $C$ above is referred as the
\emph{semi-convexity constant} of $u$.

(ii) A nonempty, proper open subset $\boz$ of $\rn$ is said to be
\emph{semi-convex} provided that there exist $b,\,c\in(0,\fz)$ with the
property that, for every $x_0\in\paz\boz$, there exist an
$(n-1)$-dimensional affine variety $H\subset\rn$ passing through
$x_0$, a choice $N$ of the unit normal to $H$, and an open set
$$\mathcal{C}:=\{\wz{x}+tN:\
\wz{x}\in H,\ |\wz{x}-x_0|<b,\ |t|<c\}$$
(called a
\emph{coordinate cylinder} near $x_0$ with axis along $N$) satisfying,
for some semi-convex function $\fai:\ H\rightarrow\rr$,
$$\mathcal{C}\cap\boz=\mathcal{C}\cap\{\wz{x}+tN:\ \wz{x}\in H,\
t>\fai(\wz{x})\},$$
$$\mathcal{C}\cap\paz\boz=\mathcal{C}\cap\{\wz{x}+tN:\ \wz{x}\in H,\
t=\fai(\wz{x})\},$$
$$\mathcal{C}\cap\ol{\boz}
^{\complement}=\mathcal{C}\cap\{\wz{x}+tN:\ \wz{x}\in H,\
t<\fai(\wz{x})\},$$
$$\fai(x_0)=0\ \text{and}\ |\fai(\wz{x})|<c/2\ \text{if}\
|\wz{x}-x_0|\le b,$$
where $\ol{\boz}$ and $\ol{\boz}^{\complement}$, respectively,
denotes the closure of $\boz$ in $\rn$
and the complementary set of $\overline{\boz}$ in $\rn$.
\end{definition}

To characterize the semi-convex domain, we need the following notion.

\begin{definition}\label{d1.2}
A set $E\subset\rn$ is said to satisfy an \emph{exterior ball condition} at $x\in\partial E$,
if there exist $v\in S^{n-1}$ and $r\in(0,\fz)$ such that
\begin{equation}\label{1.7}
B(x+rv,r)\subset\rn\setminus E,
\end{equation}
where $S^{n-1}$ denotes the  unit sphere in $\rn$.
For such $x\in \partial E$, let
$$r(x):=\sup\{r\in(0,\fz):\ \eqref{1.7} \ \text{holds true for some}\ v\in S^{n-1}\}.
$$
It is said that the set $E$ satisfies a \emph{uniform exterior ball condition} (for simplicity,
UEBC) with radius $r\in(0,\fz]$,
if
\begin{equation}\label{1.8}
\inf_{x\in\partial E}r(x)\ge r,
\end{equation}
and the value $r$ in \eqref{1.8} is referred to the UEBC constant. Then it is said that
the set $E$ satisfies a UEBC, if there exists $r\in(0,\fz]$ with the property that $E$ satisfies
 the uniform exterior ball condition with radius $r$. Moreover, the largest such constant $r$
is called the \emph{uniform ball constant} of $E$.
\end{definition}

\begin{remark}\label{r1.3}

{\rm(i)} It is well known that bounded semi-convex domains
in $\rn$ are bounded Lipschitz domains, and the convex
domains in $\rn$ are semi-convex domains
(see, for example, \cite{mmmy10,mmy10}).

{\rm(ii)} It is well known that if $\boz\subset\rn$ is convex, then $\boz$ satisfies
a UEBC with the uniform ball constant $\fz$ (see, for example, \cite{e58}).
Moreover, for any open set $\boz\subset\rn$ with compact boundary,
it was proved in \cite{mmmy09} that $\boz$ is a Lipschitz domain satisfying a UEBC if and only if $\boz$
is a semi-convex domain in $\rn$ (see also \cite[Theorem 2.5]{mmmy10}).
\end{remark}

Now we recall the definitions of Lorentz and Lorentz-Sobolev spaces.

Let $\boz\subset\rn$ be an open bounded set and $u$ be a real-valued measurable function on $\boz$.
Then the \emph{distribution function} $\mu_u:\ [0,\fz)\to[0,|\boz|]$ of $u$
is defined by, for any $t\in[0,\fz)$,
\begin{equation}\label{1.9}
\mu_u(t):=|\{x\in\boz:\ |u(x)|>t\}|.
\end{equation}
The \emph{decreasing rearrangement} $u^\ast:\ [0,\fz)\to[0,\fz]$ of $u$ is defined by, for any $s\in[0,\fz)$,
\begin{equation}\label{1.10}
u^\ast(s):=\sup\{t\in[0,\fz):\ \mu_u(t)>s\}.
\end{equation}
We remark that $u^\ast$ is the unique right-continuous non-increasing function in $[0,\fz)$
equivalently distributed  with $u$, and $u^\ast(s)=0$ if $s\ge|\boz|$.
Moreover, the function $u^{\ast\ast}:\ (0,\fz)\to[0,\fz)$ is defined by, for any $s\in(0,\fz)$,
$$u^{\ast\ast}(s):=\frac{1}{s}\int_0^s u^{\ast}(r)\,dr.
$$
Then it is easy to see that, for any $s\in(0,\fz)$, $u^\ast(s)\le u^{\ast\ast}(s)$.
Furthermore, the Hardy-Littlewood inequality states that, for any measurable functions $u$ and $v$ on $\boz$,
\begin{equation}\label{1.11}
\int_{\boz}|u(x)v(x)|\,dx\le\int_0^\fz u^\ast(s)v^{\ast}(s)\,ds.
\end{equation}

Let $q\in(1,\fz)$ and $s\in(0,\fz]$. Then the \emph{Lorentz space}
$L^{q,\,s}(\boz)$ is defined to be all measurable
functions $u:\ \boz\to\rr$ satisfying that
\begin{equation}\label{1.12}
\|u\|_{L^{q,\,s}(\boz)}:=\lf\{\int_0^{|\boz|}\lf[t^{\frac{1}{q}-\frac{1}{s}}
u^\ast(t)\r]^{s}\,dt\r\}^{1/s}<\fz.
\end{equation}
We remark that, if $s\in[1,\fz]$, $L^{q,\,s}(\boz)$ is a Banach space,
equipped with the norm,
which is equivalent to $\|\cdot\|_{L^{q,\,s}(\boz)}$ and is obtained via
replacing $u^\ast$ with
$u^{\ast\ast}$ in \eqref{1.12}.
For Lorentz spaces, we have the following facts:
\begin{itemize}
\item[(i)] for $q\in(1,\fz)$, $L^{q,\,q}(\boz)=L^q(\boz)$;
\item [(ii)] if $q\in(1,\fz)$ and $s_1,\,s_2\in[1,\fz]$ with $s_1<s_2$,
then $L^{q,\,s_1}(\boz)\subsetneqq L^{q,\,s_2}(\boz)$;
\item [(iii)] if $q_1,\,q_2\in(1,\fz)$ with $q_1>q_2$ and $s_1,\,s_2\in(0,\fz]$,
then $L^{q_1,\,s_1}(\boz)\subsetneqq L^{q_2,\,s_2}(\boz)$.
\end{itemize}
Denote by $q'$ and $s'$ the H\"older's conjugate exponents of $q$ and $s$. Then
H\"older's inequality in Lorentz spaces states that there exists a positive constant
$C=C_{(q,\,s)}$, depending on $q$ and $s$,
such that, for all $u\in L^{q,\,s}(\boz)$ and $v\in L^{q',\,s'}(\boz)$,
\begin{equation}\label{1.13}
\int_{\boz}|u(x)v(x)|\,dx\le C\|u\|_{L^{q,\,s}(\boz)}\|v\|_{L^{q',\,s'}(\boz)}.
\end{equation}

Let $m\in\nn$, $q\in(1,\fz)$ and $s\in[1,\fz]$.
The \emph{Lorentz-Sobolev space} $W^mL^{q,\,s}(\boz)$ is defined by
\begin{eqnarray*}
W^mL^{q,\,s}(\boz)&&:=\{u\in L^{q,\,s}(\boz):\ u\ \text{is}\
m\text{-times weakly differentiable in} \\
&&\hs\hs\boz\ \text{and}\ |\nabla^ku|\in L^{q,\,s}(\boz),\ 1\le k\le m\}
\end{eqnarray*}
with the norm
$$\|u\|_{W^mL^{q,\,s}(\boz)}:=\|u\|_{L^{q,\,s}(\boz)}+\sum_{k=1}^m
\||\nabla^ku|\|_{L^{q,\,s}(\boz)}.
$$

Finally we make some conventions on notation. Throughout the whole
paper, we always denote by $C$ a \emph{positive constant} which is
independent of the main parameters, but it may vary from line to
line. We also use $C_{(\gz,\,\bz,\,\ldots)}$ to denote a  \emph{positive
constant} depending on the indicated parameters $\gz,$ $\bz$, $\ldots$.
The \emph{symbol} $A\ls B$ means that $A\le CB$. If $A\ls
B$ and $B\ls A$, then we write $A\sim B$.
For any measurable subset $E$ of $\rn$, we denote by $\chi_{E}$ its
\emph{characteristic function}. We also let $\nn:=\{1,\, 2,\, \ldots\}$ and
$\zz_+:=\nn\cup\{0\}$.  Finally, for $q\in[1,\fz]$, we denote by $q'$
its \emph{conjugate exponent}, namely, $1/q + 1/q'= 1$.

\section{Preliminaries\label{s2}}

\hskip\parindent In this section, we recall some necessary notations and auxiliary conclusions.
We first recall a relative \emph{isoperimetric inequality}. Let $\boz$ be a bounded Lipschitz domain
in $\rn$ with $n\ge2$. Then there exists a positive constant $C$ such that, for any measurable
set $E\subset\boz$ satisfying $|E|\le|\boz|/2$,
\begin{equation}\label{2.1}
|E|^{1/n'}\le C\ch^{n-1}(\partial E\cap\boz),
\end{equation}
where and in what follows, $\partial E$ denotes the essential boundary of $E$,
and $\ch^{n-1}$ stands for $(n-1)$-dimensional Hausdorff measure (see \cite[Corollary 5.2.1/3]{m11} for the details).
We point out that the isoperimetric inequality \eqref{2.1} can be obtained via the geometric inequality
that there exists a positive constant $C=C_{(\boz)}$ such that,
for any measurable set $E\subset\rn$ with $|E|\le|\boz|/2$,
\begin{equation}\label{2.2}
\ch^{n-1}(\partial E\cap\partial\boz)\le C\ch^{n-1}(\partial E\cap\boz)
\end{equation}
(see \cite[Chapter 6]{m11} for the details) and the classical isoperimetric inequality in $\rn$ that,
there exists a positive constant
$C=C_{(n)}$ such that, for any measurable set $E\subset\rn$ with $|E|<\fz$,
$$|E|^{1/n'}\le C\ch^{n-1}(\partial E).
$$
We also remark that the constant in \eqref{2.1} depends on $n$ and the constant in \eqref{2.2}
(see, \cite[Section 5]{cm14a} or \cite[Section 3]{cm11} for further details about \eqref{2.1}).

Let $u\in W^{2,\,1}(\boz)$. Then $|\nabla u|\in W^{1,\,1}(\boz)$.
From an application of the coarea formula for Sobolev
functions, it follows that, for any Borel function $g:\ \boz\to[0,\fz)$ and $t\in[0,\fz)$,
\begin{equation}\label{2.3}
\int_{\{|\nabla u|>t\}}g(x)|\nabla|\nabla u(x)||\,dx=
\int_t^\fz\int_{\{|\nabla u|=\tau\}}g(x)d\ch^{n-1}\,d\tau,
\end{equation}
where $\{|\nabla u|>t\}:=\{x\in\boz:\ |\nabla u(x)|>t\}$
and $\{|\nabla u|=t\}:=\{x\in\boz:\ |\nabla u(x)|=t\}$
(see, for example, \cite{bz88}).
Moreover, if the left hand side of \eqref{2.3} is finite for $t\in(0,\fz)$, then the left hand side of \eqref{2.3}
is an absolutely continuous function of $t$ and for almost every $t\in(0,\fz)$,
\begin{equation}\label{2.4}
-\frac{d}{dt}\int_{\{|\nabla u|>t\}}g(x)|\nabla|\nabla u(x)||\,dx=\int_{\{|\nabla u|=t\}}g(x)d\ch^{n-1}.
\end{equation}
Furthermore, by the coarea formula again, we know that, for almost every $t\in(0,\fz)$,
$\ch^{n-1}(\{|\nabla u|=t\}\cap\{|\nabla|\nabla u||=0\})=0$. Thus, for such function $g$ as in \eqref{2.3} and  $t\in[0,\fz)$,
\begin{eqnarray*}
\int_{\{|\nabla u|>t\}}g(x)\,dx=\int_{\{|\nabla u|>t\}\cap\{|\nabla|\nabla u||=0\}}g(x)\,dx
+\int_t^\fz\int_{\{|\nabla u|=\tau\}}
\frac{g(x)}{|\nabla|\nabla u(x)||}\,\ch^{n-1}\,d\tau,
\end{eqnarray*}
which further implies that, if $g\in L^1(\boz)$, then for almost every $t\in(0,\fz)$,
\begin{equation}\label{2.5}
-\frac{d}{dt}\int_{\{|\nabla u|>t\}}g(x)\,dx\ge\int_{\{|\nabla u|=t\}}\frac{g(x)}{|\nabla|\nabla u(x)||}\,\ch^{n-1}.
\end{equation}

\begin{lemma}\label{l2.1}
Let $n\ge2$ and $\boz$ be a bounded Lipschitz domain in $\rn$. Assume that $v\in W^{1,\,2}(\boz)$ is  non-negative,
$\mu_v$ and $v^\ast$ denote the distribution function and the decreasing rearrangement of $v$
defined as in \eqref{1.9} and \eqref{1.10}, respectively.
Then there exists a positive constant $C$, depending on the
constant in \eqref{2.2}, such that,
for almost every $t\in[v^\ast(|\boz|/2),\fz)$,
\begin{equation}\label{2.6}
1\le C[-\mu_v'(t)]^{1/2}[\mu_v(t)]^{-1/n'}\lf\{-\frac{d}{dt}
\int_{\{v>t\}}|\nabla v(x)|^2\,dx\r\}^{1/2},
\end{equation}
where $\mu'_v$ denotes the derivative of $\mu_v$.
\end{lemma}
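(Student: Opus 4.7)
The plan is to combine the relative isoperimetric inequality \eqref{2.1} applied to the superlevel set $\{v>t\}$ with a Cauchy--Schwarz estimate on the level set $\{v=t\}$, and then rewrite the two resulting factors by means of the coarea identities \eqref{2.3}--\eqref{2.5} applied to $v$ in place of $|\nabla u|$.

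First I would set $E:=\{v>t\}$. Since $t\ge v^\ast(|\boz|/2)$, one has $|E|=\mu_v(t)\le|\boz|/2$, so \eqref{2.1} gives
$$\mu_v(t)^{1/n'}\le C\,\ch^{n-1}(\partial E\cap\boz).$$
For almost every $t\in(0,\fz)$, the essential boundary of $\{v>t\}$ inside $\boz$ coincides, up to an $\ch^{n-1}$-null set, with the level set $\{v=t\}\cap\boz$ (a standard consequence of the coarea formula for Sobolev functions), and hence
$$\mu_v(t)^{1/n'}\le C\,\ch^{n-1}(\{v=t\}\cap\boz).$$

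Next I would apply Cauchy--Schwarz on this level set. The coarea formula also guarantees that, for almost every $t$, $\ch^{n-1}(\{v=t\}\cap\{|\nabla v|=0\})=0$, so splitting the integrand as $1=|\nabla v|^{1/2}\cdot|\nabla v|^{-1/2}$ on $\{|\nabla v|>0\}$ yields
$$\ch^{n-1}(\{v=t\}\cap\boz)\le\lf[\int_{\{v=t\}}|\nabla v(x)|\,d\ch^{n-1}\r]^{1/2}\lf[\int_{\{v=t\}}\frac{1}{|\nabla v(x)|}\,d\ch^{n-1}\r]^{1/2}.$$

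Finally I would identify each factor via the coarea identities of Section \ref{s2}. The chain
$$\int_{\{v>t\}}|\nabla v(x)|^2\,dx=\int_t^{\fz}\int_{\{v=\tau\}}|\nabla v(x)|\,d\ch^{n-1}\,d\tau,$$
obtained by writing the weight as $|\nabla v|^2=|\nabla v|\cdot|\nabla v|$ and applying the coarea formula to $v$, yields, for a.e.\ $t$,
$$-\frac{d}{dt}\int_{\{v>t\}}|\nabla v(x)|^2\,dx=\int_{\{v=t\}}|\nabla v(x)|\,d\ch^{n-1},$$
while the analogue of \eqref{2.5} for $v$ with $g\equiv 1$ gives
$$\int_{\{v=t\}}\frac{1}{|\nabla v(x)|}\,d\ch^{n-1}\le-\mu_v'(t).$$
Substituting these two bounds into the Cauchy--Schwarz estimate and combining with the isoperimetric inequality above produces exactly \eqref{2.6}. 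The main---though mild---obstacle is the careful handling of the geometric-measure-theoretic technicalities for Sobolev functions: the Sard-type statement that a.e.\ level set carries no $\ch^{n-1}$-mass on $\{|\nabla v|=0\}$, and the identification of essential boundaries of superlevel sets with level sets. Both are standard and fully documented in the references already cited by the paper (see, e.g., \cite{m11,bz88}).
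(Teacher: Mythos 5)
Your argument is correct and is precisely the standard proof of this differential inequality of Maz'ya, which the paper itself does not prove but only cites (\cite{m69}); it is also the same chain of ingredients—relative isoperimetric inequality \eqref{2.1}, identification of $\partial\{v>t\}\cap\boz$ with $\{v=t\}$, Cauchy--Schwarz on the level set, and the coarea identities \eqref{2.3}--\eqref{2.5}—that the paper deploys explicitly in Step 3 of the proof of Theorem \ref{t1.1} (compare \eqref{3.9} and \eqref{3.10}). The only point worth making explicit is that $\mu_v(t)\le|\boz|/2$ for $t\ge v^\ast(|\boz|/2)$ follows from Lemma \ref{l2.6} together with the monotonicity of $\mu_v$, and that all the coarea machinery applies to $v\in W^{1,2}(\boz)\subset W^{1,1}(\boz)$ just as it does to $|\nabla u|$ in Section \ref{s2}; with those remarks your proof is complete.
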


Lemma \ref{l2.1} was established by Maz'ya \cite{m69}.

The following Lemmas \ref{l2.2}, \ref{l2.3} and \ref{l2.4} are,
respectively, just \cite[Proposition 3.4,
Lemmas 3.5 and 3.6]{cm11}.

\begin{lemma}\label{l2.2}
Let $\boz\subset\rn$ be a measurable set, $w:\ \boz\to[0,\fz)$
a measurable function and $g\in L^1(\boz)$.
The function $\fai:\ (0,|\boz|)\to[0,\fz)$ is defined by, for any $s\in(0,|\boz|)$,
$$\fai(s):=\frac{d}{ds}\int_{\{w>w^\ast(s)\}}|g(x)|\,dx.
$$
Then for any $s\in(0,|\boz|)$,
\begin{equation*}
\int_0^s\fai^\ast(r)\,dr\le\int_0^sg^\ast(r)\,dr.
\end{equation*}
\end{lemma}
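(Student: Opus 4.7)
The plan is to combine the variational characterization of the decreasing rearrangement with a telescoping identity that rewrites partial integrals of $\fai$ as integrals of $|g|$ over disjoint annular sets of $w$, so that the Hardy-Littlewood inequality \eqref{1.11} applies. To begin, I would set $F(s):=\int_{\{w>w^\ast(s)\}}|g(x)|\,dx$; $F$ is non-decreasing on $(0,|\boz|)$ (as $w^\ast$ is non-increasing, the super-level sets $\{w>w^\ast(s)\}$ grow with $s$), hence differentiable almost everywhere, with $\fai=F'\ge0$. The elementary identity
$$\int_0^s\fai^\ast(r)\,dr=\sup\lf\{\int_E\fai(r)\,dr:\ E\subset(0,|\boz|)\ \text{measurable},\ |E|\le s\r\},$$
which follows from applying \eqref{1.11} to $\fai$ and $\chi_E$, reduces matters to proving $\int_E\fai(r)\,dr\le\int_0^s g^\ast(r)\,dr$ for every such $E$. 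By inner regularity and monotone convergence, it is enough to treat $E=\bigcup_{i=1}^N(a_i,b_i)$ with $a_1<b_1\le a_2<\cdots\le a_N<b_N$.

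For such $E$ I would invoke the basic estimate $\int_{a_i}^{b_i}\fai(r)\,dr\le F(b_i)-F(a_i)$, valid for the a.e.\ derivative of any non-decreasing function, and rewrite each increment as
$$F(b_i)-F(a_i)=\int_{A_i}|g(x)|\,dx,\qquad A_i:=\{x\in\boz:\ w^\ast(b_i)<w(x)\le w^\ast(a_i)\}.$$
The monotonicity of $w^\ast$ forces the $A_i$ to be pairwise disjoint: for $i<j$ one has $b_i\le a_j$, hence $w^\ast(b_i)\ge w^\ast(a_j)$, so $x\in A_i\cap A_j$ would yield the contradictory chain $w^\ast(b_i)<w(x)\le w^\ast(a_j)\le w^\ast(b_i)$. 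Summing and applying \eqref{1.11} to $|g|\chi_{\bigcup_iA_i}$ would then give
$$\int_E\fai(r)\,dr\le\sum_{i=1}^N\int_{A_i}|g|\,dx=\int_{\bigcup_iA_i}|g|\,dx\le\int_0^{|\bigcup_iA_i|}g^\ast(r)\,dr.$$

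The last step is to bound $|\bigcup_iA_i|$ by $|E|=\sum_i(b_i-a_i)\le s$. The defining properties of $w^\ast$ imply $|\{w>w^\ast(t)\}|\le t$ and $|\{w\ge w^\ast(t)\}|\ge t$ for every $t\in(0,|\boz|)$, from which $|\{w^\ast(b_i)<w<w^\ast(a_i)\}|\le b_i-a_i$. The only possible discrepancy between this and $|A_i|$ is the level set $\{w=w^\ast(a_i)\}$, which has positive measure precisely when $w^\ast$ has a plateau at $a_i$; on any such plateau, however, $F$ is constant and $\fai\equiv 0$, so the part of $(a_i,b_i)$ lying in the plateau contributes nothing to $\int_{(a_i,b_i)}\fai$ and may be discarded without affecting the sum. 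I expect this reconciliation between possibly positive-measure level sets of $w$ and the naive interval-length count to be the main technical point of the argument; once each $(a_i,b_i)$ is replaced by its intersection with the non-plateau part of $(0,|\boz|)$, we obtain $|A_i|\le b_i-a_i$ for every $i$, hence $|\bigcup_iA_i|\le s$, and the conclusion $\int_E\fai(r)\,dr\le\int_0^s g^\ast(r)\,dr$ follows after taking the supremum over admissible $E$.
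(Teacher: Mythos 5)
The paper offers no proof of this lemma at all: it is quoted verbatim from \cite[Proposition 3.4]{cm11}. Judged on its own, your argument is correct in substance and is essentially the standard one for such ``pseudo-rearrangement'' estimates: the reduction to $\int_E\fai\le\int_0^sg^\ast$ for $|E|\le s$, the bound $\int_{a_i}^{b_i}\fai\le F(b_i)-F(a_i)$ for the a.e.\ derivative of a monotone function, the identification $F(b_i)-F(a_i)=\int_{A_i}|g|$ with pairwise disjoint $A_i$, and the final application of \eqref{1.11} are all sound.

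The step you correctly flag as the crux --- reconciling $|A_i|$ with $b_i-a_i$ when $w$ has level sets of positive measure --- is right in spirit but imprecise as written: ``intersecting $(a_i,b_i)$ with the non-plateau part of $(0,|\boz|)$'' does not produce intervals, and one must check that the new left endpoints actually satisfy the needed identity. The clean fix is to note that every maximal level-interval of $w^\ast$ at an attained value $\lz$ equals $[\,|\{w>\lz\}|,\,|\{w\ge\lz\}|\,)$, i.e.\ is half-open with left endpoint $\mu_w(\lz)$. Hence, whenever $\mu_w(w^\ast(a_i))<a_i$, replace $a_i$ by $a_i':=|\{w\ge w^\ast(a_i)\}|$ (discarding the interval if $b_i\le a_i'$): $F$ is constant on $(a_i,a_i')$, so $\int_{(a_i,b_i)}\fai$ is unchanged; the new endpoint satisfies $\mu_w(w^\ast(a_i'))=a_i'$, whence $|A_i'|=\mu_w(w^\ast(b_i))-a_i'\le b_i-a_i'$ by Lemma \ref{l2.6}; and since left endpoints only move rightward, disjointness of the intervals and the bound on their total length are preserved. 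With that adjustment the chain $\int_E\fai\le\int_{\bigcup_iA_i'}|g|\le\int_0^sg^\ast$ closes as intended. Two smaller remarks: the identity $\int_0^s\fai^\ast(r)\,dr=\sup\{\int_E\fai:\ |E|\le s\}$ requires both inequalities, and \eqref{1.11} only yields the upper bound for the supremum (the lower bound, which is the one you actually use, comes from choosing $E$ to be a near-optimal superlevel set of $\fai$); and the reduction to finite unions of intervals goes through outer rather than inner regularity, yielding $|E'|\le s+\dz$ and requiring a limit $\dz\to0$ at the end, which is harmless since $\int_0^{s+\dz}g^\ast\to\int_0^sg^\ast$.
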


\begin{lemma}\label{l2.3}
Let $L\in(0,\fz]$ and $\fai,\,\psi:\ [0,L)\to[0,\fz)$ be measurable functions satisfying that, for any $s\in(0,|\boz|)$,
$\int_0^s[\fai^\ast(r)]^2\,dr\le\int_0^s[\psi^\ast(r)]^2\,dr$. Then for any $\gz\in(1/2,\fz)$, there exists a positive constant
$C_{(\gz)}$, depending on $\gz$, such that
\begin{equation*}
\int_0^L\fai(s)s^{-\gz}\,ds\le C_{(\gz)}\int_0^L\psi^\ast(s)s^{-\gz}\,ds.
\end{equation*}
\end{lemma}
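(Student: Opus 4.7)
The plan is to reduce the inequality to a weighted Hardy-type estimate by two short preparatory steps followed by a single integration by parts. Write $F(s):=\int_0^s[\fai^\ast(r)]^2\,dr$ and $G(s):=\int_0^s[\psi^\ast(r)]^2\,dr$, so the hypothesis reads $F(s)\le G(s)$.

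First I would apply the Hardy-Littlewood inequality \eqref{1.11} to $\fai$ against the weight $s^{-\gz}$. Since $s\mapsto s^{-\gz}$ is already non-increasing on $(0,L)$ and hence coincides with its own decreasing rearrangement there,
\[
\int_0^L\fai(s)\,s^{-\gz}\,ds\le\int_0^L\fai^\ast(s)\,s^{-\gz}\,ds.
\]
Next, the monotonicity of $\fai^\ast$ combined with the hypothesis gives the pointwise bound $s[\fai^\ast(s)]^2\le F(s)\le G(s)$, so $\fai^\ast(s)\le s^{-1/2}G(s)^{1/2}$, and therefore
\[
\int_0^L\fai^\ast(s)\,s^{-\gz}\,ds\le\int_0^L s^{-\gz-1/2}\,G(s)^{1/2}\,ds.
\]

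The heart of the argument is then the weighted Hardy-type estimate
\[
\int_0^L s^{-\gz-1/2}\,G(s)^{1/2}\,ds\le\frac{1}{2\gz-1}\int_0^L\psi^\ast(s)\,s^{-\gz}\,ds.
\]
I would prove this by integrating by parts on a truncation $(\epsilon,L')\subset(0,L)$ against the primitive $W(s):=\int_s^{L'}r^{-\gz-1/2}\,dr\le s^{1/2-\gz}/(\gz-1/2)$. Using $(G^{1/2})'(s)=[\psi^\ast(s)]^2/(2G(s)^{1/2})$, and the key algebraic observation that, by the monotonicity of $\psi^\ast$, $G(s)^{1/2}\ge s^{1/2}\psi^\ast(s)$, the interior integral is dominated by
\[
\int_\epsilon^{L'}\frac{[\psi^\ast(s)]^2\,W(s)}{2\,G(s)^{1/2}}\,ds\le\frac{1}{2(\gz-1/2)}\int_\epsilon^{L'}\psi^\ast(s)\,s^{-\gz}\,ds,
\]
which is exactly the desired right-hand side.

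The main obstacle is justifying that the boundary contribution $G(\epsilon)^{1/2}W(\epsilon)\lesssim G(\epsilon)^{1/2}\epsilon^{1/2-\gz}$ vanishes as $\epsilon\to0^+$, equivalently $G(\epsilon)=o(\epsilon^{2\gz-1})$. We may assume the right-hand side of the target inequality is finite, else there is nothing to prove. The monotonicity of $\psi^\ast$ then yields
\[
\int_0^\epsilon\psi^\ast(s)\,s^{-\gz}\,ds\gtrsim\psi^\ast(\epsilon)\int_{\epsilon/2}^\epsilon s^{-\gz}\,ds\sim\psi^\ast(\epsilon)\,\epsilon^{1-\gz};
\]
since the left-hand side tends to $0$ as $\epsilon\to0^+$, we deduce $\psi^\ast(\epsilon)=o(\epsilon^{\gz-1})$. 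Squaring and integrating this pointwise decay gives $G(\epsilon)=o(\epsilon^{2\gz-1})$, which is exactly what is needed for the boundary term to drop out in the limit, completing the proof with $C_{(\gz)}=1/(2\gz-1)$.
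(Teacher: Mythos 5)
Your argument is correct. Note first that the paper does not prove Lemma \ref{l2.3} at all: it is imported verbatim as \cite[Lemma 3.5]{cm11}, so there is no in-text proof to compare against; what you have produced is a self-contained derivation in the same spirit as the Cianchi--Maz'ya original (Hardy--Littlewood to pass from $\fai$ to $\fai^\ast$, then a weighted Hardy-type estimate exploiting the monotonicity of rearrangements). Each step checks out: $(s^{-\gz})^\ast=s^{-\gz}$ on $(0,L)$, so \eqref{1.11} gives the first reduction; monotonicity of $\fai^\ast$ gives $s[\fai^\ast(s)]^2\le F(s)\le G(s)$; and the integration by parts against $W$ together with $G(s)^{1/2}\ge s^{1/2}\psi^\ast(s)$ yields the constant $1/(2\gz-1)$. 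Your treatment of the boundary term is the right one and is complete: assuming the right-hand side finite, absolute continuity of the integral forces $\psi^\ast(\epsilon)=o(\epsilon^{\gz-1})$, whence $G(\epsilon)=o(\epsilon^{2\gz-1})$ because $2\gz-2>-1$. Two small points you pass over silently but which do hold: (i) the integration by parts requires $G^{1/2}$ to be absolutely continuous, which is true because $G$ is a nondecreasing absolutely continuous function, so $\{G=0\}$ is an initial interval and $\sqrt{\cdot}$ is Lipschitz on $[G(s),\infty)$ for $s$ beyond it; (ii) for $\gz\ge1$ the right-hand side is infinite unless $\psi=0$ a.e., so the only substantive range is $\gz\in(1/2,1)$, which is exactly what the application ($\gz=1/n'$) uses. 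Neither is a gap.
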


\begin{lemma}\label{l2.4}
Let $L\in(0,\fz]$ and $\gz\in(1/2,1)$. Then there exists a positive constant
$C_{(\gz)}$, depending on $\gz$, such that, for any non-increasing function $\fai:\ (0,L)\to[0,\fz)$,
\begin{equation*}
\lf\{\int_0^Ls^{-2\gz}\int_0^s[\fai(r)]^2\,dr\,ds\r\}^{1/2}\le C_{(\gz)}\int_0^L s^{-\gz}\fai(s)\,ds.
\end{equation*}
\end{lemma}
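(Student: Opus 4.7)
The plan is to reduce this weighted Hardy-type inequality to a pointwise bound produced by the monotonicity of $\fai$. The first move is to square the left-hand side and apply Fubini, which is legitimate since the integrand is non-negative:
$$
\int_0^L s^{-2\gz}\int_0^s[\fai(r)]^2\,dr\,ds=\int_0^L[\fai(r)]^2\int_r^L s^{-2\gz}\,ds\,dr.
$$
Because $\gz\in(1/2,1)$ and hence $1-2\gz<0$, the inner integral is bounded by $r^{1-2\gz}/(2\gz-1)$, regardless of whether $L$ is finite or $\fz$. Thus the squared left-hand side is controlled by
$$
\frac{1}{2\gz-1}\int_0^L r^{1-2\gz}[\fai(r)]^2\,dr.
$$

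Setting $A:=\int_0^L s^{-\gz}\fai(s)\,ds$ for the quantity to beat, the next step exploits monotonicity. Since $\fai(u)\ge\fai(r)$ for $u\in(0,r)$, one has $r\fai(r)\le\int_0^r\fai(u)\,du$; upgrading this via $u^{-\gz}\ge r^{-\gz}$ yields
$$
\int_0^r\fai(u)\,du\le r^{\gz}\int_0^r u^{-\gz}\fai(u)\,du\le r^{\gz}A,
$$
so that $\fai(r)\le r^{\gz-1}A$ pointwise on $(0,L)$.

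The decisive trick is to apply this pointwise bound to only one of the two factors of $[\fai(r)]^2$, preserving the other so that it pairs with the residual weight $r^{-\gz}$ and regenerates the integral $A$:
$$
r^{1-2\gz}[\fai(r)]^2\le A\cdot r^{1-2\gz}\cdot r^{\gz-1}\fai(r)=A\,r^{-\gz}\fai(r).
$$
Integration over $(0,L)$ gives $\int_0^L r^{1-2\gz}[\fai(r)]^2\,dr\le A^2$, and taking square roots produces the claim with constant $C_{(\gz)}=(2\gz-1)^{-1/2}$.

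The main obstacle, and the subtle choice on which the proof turns, is this asymmetric splitting in the last step. If one were tempted to apply $\fai(r)\le r^{\gz-1}A$ to \emph{both} factors of $[\fai(r)]^2$, the integrand would become $A^2 r^{-1}$, producing a logarithmically divergent integral and losing all dependence on $\fai$. Keeping one factor of $\fai$ unsymmetrized is what recovers the correct structure and yields a finite constant independent of $L$; the rest of the argument is routine Fubini and an endpoint check on the weight $r^{1-2\gz}$.
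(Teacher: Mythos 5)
Your proof is correct. Note that the paper itself gives no proof of this lemma: it simply quotes it as \cite[Lemma 3.6]{cm11}, so there is no argument in the text to compare against line by line. Your derivation is a clean, self-contained substitute: Tonelli turns the left-hand side squared into $\int_0^L[\varphi(r)]^2\int_r^Ls^{-2\gamma}\,ds\,dr\le(2\gamma-1)^{-1}\int_0^Lr^{1-2\gamma}[\varphi(r)]^2\,dr$, monotonicity gives the pointwise bound $\varphi(r)\le r^{\gamma-1}A$ with $A:=\int_0^Ls^{-\gamma}\varphi(s)\,ds$ (assuming $A<\infty$, the only nontrivial case), and applying that bound to exactly one factor of $\varphi(r)^2$ closes the loop with constant $(2\gamma-1)^{-1/2}$. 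Each step checks out, the exponents balance ($1-2\gamma+\gamma-1=-\gamma$), and the argument is uniform in $L$, including $L=\infty$. Two minor remarks: your proof in fact never uses the upper restriction $\gamma<1$ (only $\gamma>1/2$ matters, for the convergence of $\int_r^Ls^{-2\gamma}\,ds$), so you prove slightly more than is stated; and your closing observation about why the splitting must be asymmetric is exactly the right diagnosis of where a naive attempt fails.
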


\begin{lemma}\label{l2.5}
Let $n\ge2$, $\boz$ be an open set in $\rn$ and $u\in C^3(\boz)$. Then
\begin{equation*}
(\Delta u)^2=\mathrm{div}(\Delta u\nabla u)-\sum_{i,\,j=1}^n\lf(u_{x_ix_j}u_{x_i}\r)_{x_j}+\lf|\nabla^2 u\r|^2.
\end{equation*}
\end{lemma}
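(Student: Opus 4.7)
The identity is a purely pointwise computation, so the plan is to expand each of the two divergence-type terms on the right-hand side separately via the product rule, using that $u\in C^3(\boz)$ ensures equality of mixed partials, and then observe a cancellation.

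First I would compute
\begin{equation*}
\mathrm{div}(\Delta u\,\nabla u)=\sum_{j=1}^n\lf(\Delta u\cdot u_{x_j}\r)_{x_j}
=\sum_{j=1}^n(\Delta u)_{x_j}u_{x_j}+\sum_{j=1}^n\Delta u\cdot u_{x_jx_j}
=\nabla(\Delta u)\cdot\nabla u+(\Delta u)^2,
\end{equation*}
so that $(\Delta u)^2=\mathrm{div}(\Delta u\,\nabla u)-\nabla(\Delta u)\cdot\nabla u$.

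Next I would expand the mixed term analogously. Using the product rule and then the symmetry $u_{x_ix_jx_j}=u_{x_jx_jx_i}$, which is guaranteed by $u\in C^3(\boz)$, I get
\begin{equation*}
\sum_{i,\,j=1}^n\lf(u_{x_ix_j}u_{x_i}\r)_{x_j}
=\sum_{i,\,j=1}^nu_{x_ix_jx_j}u_{x_i}+\sum_{i,\,j=1}^n\lf(u_{x_ix_j}\r)^2
=\sum_{i=1}^nu_{x_i}(\Delta u)_{x_i}+\lf|\nabla^2u\r|^2
=\nabla u\cdot\nabla(\Delta u)+\lf|\nabla^2u\r|^2.
\end{equation*}
Subtracting this from the previous display and adding $|\nabla^2 u|^2$, the cross term $\nabla(\Delta u)\cdot\nabla u$ cancels and the $|\nabla^2u|^2$ terms cancel, leaving exactly $(\Delta u)^2$, which gives the claimed identity.

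There is essentially no obstacle here; the only subtlety is bookkeeping with mixed partials, which is precisely why the hypothesis $u\in C^3(\boz)$ appears (so that $(\Delta u)_{x_i}=\sum_{j}u_{x_jx_jx_i}=\sum_j u_{x_ix_jx_j}$ is legitimate). For applications where $u$ is only $W^{2,2}$, one would instead regard the identity as holding weakly after an approximation argument, but at the level of this lemma the statement is a direct consequence of Schwarz's theorem on the equality of mixed partial derivatives.
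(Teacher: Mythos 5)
Your computation is correct: expanding both divergence-type terms by the product rule and invoking Schwarz's theorem to write $\sum_j u_{x_ix_jx_j}=(\Delta u)_{x_i}$ makes the cross terms $\nabla(\Delta u)\cdot\nabla u$ and the $|\nabla^2u|^2$ terms cancel exactly as you say. The paper omits the proof entirely (calling the identity a corollary of the divergence theorem, which is really only needed when the identity is later integrated over level sets), and your direct pointwise verification is precisely the intended argument.
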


Lemma \ref{l2.5} is a corollary of the divergence theorem, the details being omitted here.

Moreover, we need the following properties of the distribution function and the decreasing rearrangement (see,
for example, \cite[Proposition 1.4.5]{g14}).

\begin{lemma}\label{l2.6}
Let $\boz$ be an open set in $\rn$ and $f$ a measurable function on $\boz$.
Then for any $t\in[0,\fz)$ and $s\in(0,\fz)$, $\mu_f(f^\ast(t))\le t$ and $f^{\ast}(\mu_f(s))\le s$.
\end{lemma}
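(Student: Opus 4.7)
The plan is to derive both inequalities directly from the sup-definition \eqref{1.10} of $f^\ast$, combined with the monotonicity and right-continuity of the distribution function $\mu_f$. Before starting I would record two basic facts about the $\mu_f$ defined in \eqref{1.9}: first, $\mu_f$ is non-increasing on $[0,\fz)$, since $\{x\in\boz:\ |f(x)|>\tau_1\}\supseteq\{x\in\boz:\ |f(x)|>\tau_2\}$ whenever $\tau_1\le\tau_2$; second, $\mu_f$ is right-continuous, since the sets $\{|f|>t+1/k\}$ increase to $\{|f|>t\}$ as $k\to\fz$, so continuity of measure from below yields $\mu_f(t+1/k)\uparrow\mu_f(t)$, that is, $\mu_f(t^+)=\mu_f(t)$.

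For the first inequality $\mu_f(f^\ast(t))\le t$, I would fix $t\in[0,\fz)$ and set $\alpha:=f^\ast(t)$. By the very definition of supremum in \eqref{1.10}, every $\tau\in(\alpha,\fz)$ must satisfy $\mu_f(\tau)\le t$, for otherwise $\tau$ would itself belong to the set over which the supremum is taken. Letting $\tau\downarrow\alpha$ along a countable sequence and invoking the right-continuity of $\mu_f$ established in the previous paragraph yields $\mu_f(\alpha)\le t$, which is the desired inequality.

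For the second inequality $f^\ast(\mu_f(s))\le s$, I would argue directly from monotonicity of $\mu_f$, without needing right-continuity. Any $\tau\in[0,\fz)$ with $\mu_f(\tau)>\mu_f(s)$ must satisfy $\tau<s$; otherwise the non-increase of $\mu_f$ would force $\mu_f(\tau)\le\mu_f(s)$, a contradiction. Taking the supremum over all such $\tau$ in the definition \eqref{1.10} applied with parameter $\mu_f(s)$ in place of $t$ yields $f^\ast(\mu_f(s))\le s$.

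The only (very small) obstacle here is the use of right-continuity in the first inequality: without it one would only obtain $\mu_f(f^\ast(t)^+)\le t$ rather than the pointwise bound at $\alpha$ itself. Everything else is purely a bookkeeping exercise with the definitions, and in fact both statements are standard general facts about rearrangements that do not use any structure of $\boz$ beyond its role as the underlying measure space; this is why the author is content to merely quote \cite[Proposition 1.4.5]{g14}.
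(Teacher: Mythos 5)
Your proof is correct, and since the paper offers no proof of its own (it simply cites \cite[Proposition 1.4.5]{g14}), your direct verification from the definitions \eqref{1.9} and \eqref{1.10} — monotonicity plus right-continuity of $\mu_f$ for the first inequality, monotonicity alone for the second — is exactly the standard argument given in that reference. No gaps; at most one could remark that the first inequality implicitly assumes $f^\ast(t)<\fz$ so that $\mu_f(f^\ast(t))$ is defined, which is harmless in the bounded-domain setting where the lemma is applied.
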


Let $\boz$ be a bounded semi-convex domain in $\rn$ with $C^2$
boundary. Denote by $W$ the \emph{Weingarten matrix} of $\partial\boz$, which is defined by
the requirement that its entries are the coefficients of the second fundamental form of
the surface $\partial\boz$. Following \cite{mmmy10}, in this paper,
$W$ is defined by
$$W:=\lf((\nabla_{T}\nu_k)_j\r)_{1\le j,\,k\le n},
$$
where and in what follows,
$\nabla_{T}$  stands for the tangential gradient, which is defined by $\nabla_{T}:=\nabla-\nu\cdot\nabla$.
Then we have the following lemma for bounded semi-convex domains in $\rn$,
which was established in \cite{mmmy09} (see also \cite[Theorem 2.6]{mmmy10}).

\begin{lemma}\label{l2.7}
Let $n\ge2$ and $\boz$ be a bounded domain in $\rn$ with $C^2$
boundary, in particular, a Lipschitz domain satisfying UEBC with some constant $r_0\in(0,\fz]$.
Then the Weingarten matrix of $\partial\boz$
is bounded from below by $-C_1/r_0$ for almost every point on $\partial\boz$ with respect to the measure $\ch^{n-1}$,
where the positive constant $C_1$ depend only on the Lipschitz character of $\boz$.
\end{lemma}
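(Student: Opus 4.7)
The plan is to reduce the statement to a pointwise local calculation at each boundary point and then run it at $\ch^{n-1}$-a.e.\ point of $\partial\boz$. Fix $x_0\in\partial\boz$. By the UEBC hypothesis, there exist $v\in S^{n-1}$ and $r\ge r_0$ with $B(x_0+rv,r)\subset\rn\setminus\boz$. Since $\partial\boz\in C^2$, the tangent hyperplane to $\partial\boz$ at $x_0$ is well-defined, and because the closed ball $\ol{B(x_0+rv,r)}$ is tangent to $\partial\boz$ at $x_0$ from outside, the unit vector $v$ must coincide with the outward unit normal $\nu(x_0)$.

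I then introduce adapted Cartesian coordinates centered at $x_0$ with $\nu(x_0)=-e_n$, so that $e_n$ points into $\boz$. Using $C^2$ regularity, in a neighborhood $U$ of $x_0$ the boundary is the graph of a $C^2$ function $\fai$ defined on a ball of the tangent hyperplane, satisfying $\fai(0)=0$, $\nabla\fai(0)=0$, and $U\cap\boz=\lf\{(\wz{x},t)\in U:\ t>\fai(\wz{x})\r\}$. The exterior ball becomes $B(-re_n,r)$, whose upper cap is the graph $t=-r+\sqrt{r^2-|\wz{x}|^2}$; since this cap must lie in $\rn\setminus\boz$, we obtain
\begin{equation*}
\fai(\wz{x})\ge-r+\sqrt{r^2-|\wz{x}|^2}
\end{equation*}
for $|\wz{x}|$ small.

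Taylor expanding both sides gives $\tfrac12\wz{x}^{\mathrm T}D^2\fai(0)\wz{x}+o(|\wz{x}|^2)\ge -|\wz{x}|^2/(2r)+o(|\wz{x}|^2)$; dividing by $|\wz{x}|^2$ and letting $\wz{x}\to0$ along an arbitrary direction $\xz\in\rr^{n-1}$ yields $\xz^{\mathrm T}D^2\fai(0)\xz\ge -|\xz|^2/r\ge -|\xz|^2/r_0$, so $D^2\fai(0)\ge -I_{n-1}/r_0$ as symmetric matrices. To pass from this Hessian bound to a bound on the Weingarten matrix $W(x_0)$, note that along the graph $\nu(\wz{x})=(\nabla\fai(\wz{x}),-1)/\sqrt{1+|\nabla\fai(\wz{x})|^2}$, so at $\wz{x}=0$ (where $\nabla\fai(0)=0$) the tangential gradient $\nabla_{T}\nu_k$ reduces to the ordinary partial gradient in $\wz{x}$ for $k=1,\ldots,n-1$ and vanishes for $k=n$. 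A direct computation then shows that, in the adapted frame, $W(x_0)$ has upper-left $(n-1)\times(n-1)$ block equal to $D^2\fai(0)$ and vanishes in the $n$-th row and column; therefore $W(x_0)\ge -I_n/r_0$ as a symmetric $n\times n$ matrix.

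Performing this argument at $\ch^{n-1}$-a.e.\ $x_0\in\partial\boz$ yields the claim. The constant $C_1$ enters because, to compare the pointwise adapted orthonormal frames with a fixed Lipschitz atlas of coordinate cylinders on $\partial\boz$, one must perform a change of basis whose norm is controlled only by the Lipschitz character of $\boz$; this change of basis transforms the intrinsic bound $W\ge -I_n/r_0$ into the coordinate-dependent bound $W\ge -C_1 I_n/r_0$ that the lemma states. I expect the main technical obstacle to be exactly this bookkeeping step: identifying the entries of $W$ with those of $D^2\fai$ inside an arbitrary Lipschitz chart while keeping track of how constants propagate through the change of frame; once the scalar estimate $D^2\fai(0)\ge -I_{n-1}/r_0$ is secured, the remainder is linear algebra plus the standard formula for $\nabla_T\nu$ on a graph.
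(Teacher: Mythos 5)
The paper itself offers no proof of this lemma: it is quoted from the references [mmmy09] and [mmmy10, Theorem 2.6], where the bound is derived for general semi-convex domains by passing through the equivalence between the UEBC and the semi-convexity of the local graph functions in a fixed Lipschitz atlas; that is where the Lipschitz character of $\boz$ genuinely enters, because those graphs are tilted relative to the tangent plane. Your argument is instead a direct pointwise proof exploiting the $C^2$ hypothesis, and its core is correct: the exterior tangent ball forces $v=\nu(x_0)$; the comparison $\fai(\wz{x})\ge -r+\sqrt{r^2-|\wz{x}|^2}$ holds near the origin because $\ol{B(x_0+rv,r)}\subset\rn\setminus\boz$; the Taylor expansion gives $D^2\fai(0)\ge -I_{n-1}/r$; and at a point where $\nabla\fai(0)=0$ the matrix $\nabla_T\nu$ is exactly $D^2\fai(0)$ padded by a zero row and column. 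This is a legitimate and in fact cleaner route for the $C^2$ case stated in the lemma.

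Two corrections. First, the UEBC only guarantees $r(x_0)=\sup\{\cdots\}\ge r_0$, so an exterior ball of radius exactly $r_0$ need not exist at $x_0$; run your argument with radius $r_0-\epz$ and let $\epz\to 0$. Second, and more substantively, your closing paragraph is off the mark: all the frames you use are orthonormal, and a lower bound on a symmetric matrix as a quadratic form is invariant under orthogonal changes of basis, so no ``change of basis controlled by the Lipschitz character'' is needed and no constant is lost in transporting the bound between charts. Your argument therefore actually proves the stronger estimate $W\ge -I_n/r_0$, i.e.\ $C_1=1$, which of course implies the stated one; the dependence of $C_1$ on the Lipschitz character in the quoted theorem is an artifact of the more general setting (merely Lipschitz boundaries, where the second fundamental form exists only $\ch^{n-1}$-a.e.\ and the graph functions are not adapted) treated in the cited references, not something your $C^2$ proof needs to reproduce.
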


\begin{lemma}\label{l2.8}
Let $n\ge3$ and $\boz\subset\rn$ be a bounded domain with $\partial\boz\in C^2$.
Assume that $u\in C^\fz(\boz)\cap C^2(\overline{\boz})$ satisfies $u=0$ on $\partial\boz$.
Let $\cb$ denote the second fundamental form on $\partial\boz$ and $\mathrm{tr}\cb$ be its trace.
Then for almost every $t\in(0,\fz)$,
\begin{eqnarray}\label{2.7}
\hs\hs t\int_{\{|\nabla u|=t\}}|\nabla|\nabla u(x)||\,d\ch^{n-1}&&\le t\int_{\{|\nabla u|=t\}}|\Delta u(x)|\,d\ch^{n-1}
+\int_{\{|\nabla u|>t\}}|\Delta u(x)|^2\,dx\\ \nonumber
&&\hs+\|\nabla u\|^2_{L^\fz(\boz)}\int_{\partial\boz\cap\partial\{|\nabla u|>t\}}|\mathrm{tr}\cb(x)|\,d\ch^{n-1}.
\end{eqnarray}
Moreover, if $r\in(n-1,\fz)$, then for almost every $t\in[t_u,\fz)$,
\begin{eqnarray}\label{2.8}
\hs\hs t\int_{\{|\nabla u|=t\}}|\nabla|\nabla u(x)||\,d\ch^{n-1}&&\le
t\int_{\{|\nabla u|=t\}}|\Delta u(x)|\,d\ch^{n-1}+\int_{\{|\nabla u|>t\}}|\Delta u(x)|^2\,dx\\ \nonumber
&&\hs+2t^2\int_{\partial\boz\cap\partial\{|\nabla u|>t\}}|\mathrm{tr}\cb(x)|\,d\ch^{n-1},
\end{eqnarray}
where $t_u:=|\nabla u|^\ast(\az\boz)$ with $\az\in(0,1/2]$ being a constant depending on $n$, $r$,
$\|\mathrm{tr}\cb\|_{L^r(\partial\boz)}$, $\boz$ and the constant in \eqref{2.2}.

If $\boz$ is semi-convex, then there exists a positive constant $C$, depending on the Lipschitz character and
the uniform ball constant of $\boz$, such that, for almost every $t\in(0,\fz)$,
\begin{eqnarray}\label{2.9}
\hs\hs t\int_{\{|\nabla u|=t\}}|\nabla|\nabla u(x)||\,d\ch^{n-1}&&\le
t\int_{\{|\nabla u|=t\}}|\Delta u(x)|\,d\ch^{n-1}+\int_{\{|\nabla u|>t\}}|\Delta u(x)|^2\,dx\\ \nonumber
&&\hs+C\|\nabla u\|^2_{L^\fz(\boz)}\ch^{n-1}(\partial\boz\cap\partial\{|\nabla u|>t\}).
\end{eqnarray}
Furthermore, there exist positive constants $\az\in(0,1/2]$,
depending on $n$, $\boz$ and the constant in \eqref{2.2}, and $C$,
depending on the Lipschitz character and
the uniform ball constant of $\boz$, such that, for almost every $t\in[t_u,\fz)$, where
$t_u:=|\nabla u|^\ast(\az\boz)$,
\begin{eqnarray}\label{2.10}
\hs\hs t\int_{\{|\nabla u|=t\}}|\nabla|\nabla u(x)||\,d\ch^{n-1}&&\le
t\int_{\{|\nabla u|=t\}}|\Delta u(x)|\,d\ch^{n-1}+\int_{\{|\nabla u|>t\}}
|\Delta u(x)|^2\,dx\\ \nonumber
&&\hs+Ct^2\ch^{n-1}(\partial\boz\cap\partial\{|\nabla u|>t\}).
\end{eqnarray}
\end{lemma}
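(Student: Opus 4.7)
The plan is to prove a localized Reilly-type identity on the super-level sets $E_t:=\{|\nabla u|>t\}$, which for a.e.\ $t$ are sets of finite perimeter with $\partial E_t\cap\boz\subset\{|\nabla u|=t\}$ (by Sard's theorem and the coarea formula, so that the divergence theorem applies freely). Integrate the pointwise identity of Lemma~\ref{l2.5} over $E_t$ to obtain
\[
\int_{E_t}(\Delta u)^2\,dx-\int_{E_t}|\nabla^2 u|^2\,dx=\int_{\partial E_t}\Delta u\,\nabla u\cdot\nu^{E_t}\,d\ch^{n-1}-\int_{\partial E_t}\sum_{i,\,j}u_{x_ix_j}u_{x_i}\nu^{E_t}_j\,d\ch^{n-1},
\]
where $\nu^{E_t}$ is the outward unit normal. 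Split $\partial E_t=(\partial E_t\cap\boz)\cup(\partial E_t\cap\paz\boz)$.

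On $\partial E_t\cap\boz$ one has $|\nabla u|=t$ and $\nu^{E_t}=-\nabla|\nabla u|/|\nabla|\nabla u||$. The elementary identity $\sum_i u_{x_ix_j}u_{x_i}=\tfrac12(|\nabla u|^2)_{x_j}$ therefore produces precisely
\[
-\int_{\partial E_t\cap\boz}\sum_{i,\,j}u_{x_ix_j}u_{x_i}\nu^{E_t}_j\,d\ch^{n-1}=t\int_{\{|\nabla u|=t\}}|\nabla|\nabla u(x)||\,d\ch^{n-1},
\]
while Cauchy--Schwarz gives $|\Delta u\,\nabla u\cdot\nu^{E_t}|\le t|\Delta u|$ on the same set. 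On $\partial E_t\cap\paz\boz$, the Dirichlet condition $u=0$ forces $\nabla u=(\paz u/\paz\nu)\nu$ and $|\paz u/\paz\nu|=|\nabla u|$, while vanishing of the tangential gradient reduces the decomposition $\Delta u=\paz^2 u/\paz\nu^2+\Delta_T u-\mathrm{tr}\cb\,\paz u/\paz\nu$ to $\paz^2 u/\paz\nu^2=\Delta u+\mathrm{tr}\cb\,\paz u/\paz\nu$. Plugging this into $\sum_{ij}u_{x_ix_j}u_{x_i}\nu_j=(\paz u/\paz\nu)(\paz^2 u/\paz\nu^2)$, the two boundary terms on $\paz\boz$ collapse to $\mathrm{tr}\cb\,|\nabla u|^2$. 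Dropping the non-positive term $-\int_{E_t}|\nabla^2 u|^2\,dx$ then yields the master inequality
\[
t\!\int_{\{|\nabla u|=t\}}\!\!|\nabla|\nabla u||\,d\ch^{n-1}\le t\!\int_{\{|\nabla u|=t\}}\!\!|\Delta u|\,d\ch^{n-1}+\!\int_{E_t}\!|\Delta u|^2\,dx+\!\int_{\partial E_t\cap\paz\boz}\!\!\mathrm{tr}\cb\,|\nabla u|^2\,d\ch^{n-1}.
\]
Estimating the last integrand crudely by $|\mathrm{tr}\cb|\,\|\nabla u\|_{L^\fz(\boz)}^2$ gives \eqref{2.7}, and invoking Lemma~\ref{l2.7} (plus $\partial\boz\in C^2$) to bound $|\mathrm{tr}\cb|$ uniformly in the semi-convex case gives \eqref{2.9}.

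For \eqref{2.8} and \eqref{2.10}, the point $t_u=|\nabla u|^\ast(\az|\boz|)$ with $\az\in(0,1/2]$ is chosen so that, by Lemma~\ref{l2.6}, $|E_t|\le\az|\boz|\le|\boz|/2$ whenever $t\ge t_u$; this activates both the relative isoperimetric inequality \eqref{2.1} and the geometric inequality \eqref{2.2}. In the last term of the master inequality one splits $\partial E_t\cap\paz\boz$ according to whether $|\nabla u|\le\sqrt{2}\,t$ or $|\nabla u|>\sqrt{2}\,t$; the low-values region immediately produces the claimed $2t^2$ prefactor, while the high-values region is estimated by H\"older's inequality with exponent $r>n-1$ (in the $C^2$ setting) or by boundedness of $\mathrm{tr}\cb$ (in the semi-convex setting), and the resulting factor $\ch^{n-1}(\partial E_t\cap\paz\boz)^{1/r'}$ (respectively $\ch^{n-1}(\partial E_t\cap\paz\boz)$) is controlled via \eqref{2.2} and \eqref{2.1} in terms of $\ch^{n-1}(\partial E_t\cap\boz)^{1/r'}|E_t|^{1/(n'r')}$. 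Choosing $\az$ sufficiently small in terms of $n$, $r$, $\|\mathrm{tr}\cb\|_{L^r(\paz\boz)}$, $\boz$, and the constant in \eqref{2.2}, the high-values contribution is absorbed into the left-hand side, leaving \eqref{2.8} and (analogously) \eqref{2.10}.

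The main obstacle is Step~5: the absorption/bootstrap that exchanges the global quantity $\|\nabla u\|_{L^\fz(\boz)}^2$ for the local quantity $2t^2$ needs a careful selection of $\az$ driving $t_u$ and a tight use of the relative isoperimetric inequality to dominate the boundary surface area by the interior level-set surface area. Secondary technicalities are the verification of the divergence theorem for the typically merely Lipschitz set $E_t$ (standard once Sard is applied), and, in the semi-convex case, the replacement of the classical second fundamental form by its $L^\fz$ bound from below afforded by Lemma~\ref{l2.7}.
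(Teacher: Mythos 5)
Your derivation of the master inequality and of \eqref{2.7}, \eqref{2.9} is essentially the paper's argument: integrate the identity of Lemma \ref{l2.5} over $\{|\nabla u|>t\}$, split $\partial\{|\nabla u|>t\}$ into $\{|\nabla u|=t\}$ and $\partial\boz\cap\partial\{|\nabla u|>t\}$, use $\nu=-\nabla|\nabla u|/|\nabla|\nabla u||$ on the interior piece, and use the Dirichlet condition together with the Grisvard decomposition to reduce the boundary integrand to $-\mathrm{tr}\cb\,(\partial u/\partial\nu)^2$. That part is fine.

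The gap is in your passage to \eqref{2.8} and \eqref{2.10}. First, you discard the term $\int_{\{|\nabla u|>t\}}|\nabla^2u(x)|^2\,dx$ from the left-hand side before attempting the absorption; in the paper's proof this term is precisely what absorbs the excess boundary contribution, so it must be retained. Second, your proposed estimate of the ``high-values'' part does not close: after H\"older's inequality you are left with a quantity of the form $\|\mathrm{tr}\cb\|_{L^r(\partial\boz)}\bigl\{\int_{\partial\boz\cap\partial\{|\nabla u|>t\}}(|\nabla u(x)|-t)^{2r'}\,d\ch^{n-1}\bigr\}^{1/r'}$ (or, if you crudely bound $|\nabla u|$ by its sup, a quantity carrying the factor $\|\nabla u\|^2_{L^\fz(\boz)}$), and neither can be ``absorbed into the left-hand side'': the left-hand side $t\int_{\{|\nabla u|=t\}}|\nabla|\nabla u(x)||\,d\ch^{n-1}$ contains no factor of $\|\nabla u\|_{L^\fz(\boz)}$ and is not comparable to any surface measure of the level set, and \eqref{2.1}--\eqref{2.2} alone cannot convert a boundary integral of $(|\nabla u|-t)^2$ into something small. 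The missing ingredient is the trace inequality of Lemma \ref{l2.9} applied to $\max\{|\nabla u|-t,0\}\in W^{1,2}(\boz)$, whose support has measure $\mu(t)\le|\boz|/2$ for $t\ge|\nabla u|^\ast(|\boz|/2)$: with $q=2r'\le 2(n-1)/(n-2)$ (this is where $r>n-1$ enters) it yields
\begin{equation*}
\int_{\partial\boz\cap\partial\{|\nabla u|>t\}}\bigl(|\nabla u(x)|-t\bigr)^2|\mathrm{tr}\cb(x)|\,d\ch^{n-1}
\le C_2[\mu(t)]^{\dz}\|\mathrm{tr}\cb\|_{L^r(\partial\boz)}\int_{\{|\nabla u|>t\}}\bigl|\nabla^2u(x)\bigr|^2\,dx
\end{equation*}
with $\dz=(n-1)/(nr')-(n-2)/n>0$. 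Choosing $\az$ so small that $2C_2[\mu(t)]^\dz\|\mathrm{tr}\cb\|_{L^r(\partial\boz)}\le1$ for $t\ge t_u$ (via Lemma \ref{l2.6}) lets the retained Hessian term swallow this contribution; this is also exactly where the dependence of $\az$ on $r$ and $\|\mathrm{tr}\cb\|_{L^r(\partial\boz)}$ in the statement comes from, a dependence your sketch produces no mechanism for. The same repair is needed for \eqref{2.10}.
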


To prove Lemma \ref{l2.8}, we need the following conclusion, which is just \cite[Lemma 5.1]{cm14a}.

\begin{lemma}\label{l2.9}
Let $n\ge3$, $\boz\subset\rn$ be a bounded Lipschitz domain and $q\in[1,2(n-1)/(n-2)]$.
Then there exists a positive constant $C$, depending
on $n$, $q$ and the constant in \eqref{2.2},
such that, for any $u\in W^{1,2}(\boz)$ with $|\supp u|\le|\boz|/2$,
$$\lf\{\int_{\partial\boz}|\mathrm{Tr}u(x)|^q\,d\ch^{n-1}\r\}^{1/q}\le
C\lf\{\int_{\boz}|\nabla u(x)|^{nq/(n+q-1)}\,dx\r\}^{(n+q-1)/nq}.
$$
\end{lemma}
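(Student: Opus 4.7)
The strategy is to reduce the inequality to the combination of two well-known ingredients: the standard Sobolev trace theorem on bounded Lipschitz domains and a Sobolev–Poincar\'e inequality for functions with small support, the latter being a direct consequence of the relative isoperimetric inequality \eqref{2.1}.

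First I would set $p:=nq/(n+q-1)$, and record the two algebraic identities
$$
p\le 2\ \Longleftrightarrow\ q(n-2)\le 2(n-1),\qquad q=\frac{p(n-1)}{n-p},
$$
so that the assumption $q\in[1,2(n-1)/(n-2)]$ is exactly $p\in[1,2]$, and $q$ is the critical Sobolev trace exponent for $W^{1,p}(\boz)$. Since $\boz$ is bounded, the hypothesis $u\in W^{1,2}(\boz)$ gives $u\in W^{1,p}(\boz)$ with $\|\nabla u\|_{L^p(\boz)}\ls|\boz|^{\frac1p-\frac12}\|\nabla u\|_{L^2(\boz)}$. The classical Sobolev trace theorem on the bounded Lipschitz domain $\boz$, followed by the Sobolev embedding $W^{1-1/p,p}(\partial\boz)\hookrightarrow L^q(\partial\boz)$ on the $(n-1)$-dimensional manifold $\partial\boz$, then yields a constant $C_1$ depending only on $n$, $q$, and $\boz$ such that
\begin{equation}\label{plan-trace}
\lf\{\int_{\partial\boz}|\mathrm{Tr}\,u(x)|^q\,d\ch^{n-1}\r\}^{1/q}
\le C_1\bigl(\|u\|_{L^p(\boz)}+\|\nabla u\|_{L^p(\boz)}\bigr).
\end{equation}

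Next I would use the support hypothesis to absorb the $L^p$–term on the right of \eqref{plan-trace} into the gradient term. Since $|\supp u|\le|\boz|/2$, every superlevel set $\{|u|>t\}$ has measure at most $|\boz|/2$, so Maz'ya's truncation–coarea argument applied to $|u|^{\gamma}$ for an appropriate $\gamma\ge1$ (starting from the relative isoperimetric inequality \eqref{2.1}) produces the Sobolev–Poincar\'e inequality
\begin{equation}\label{plan-sp}
\|u\|_{L^{p^\ast}(\boz)}\le C_2\,\|\nabla u\|_{L^p(\boz)},\qquad p^\ast:=\frac{np}{n-p},
\end{equation}
with $C_2$ depending only on $n$, $p$ and the constant in \eqref{2.2}. (When $p=1$ this is the classical BV trace/embedding statement; for $p\in(1,2]$ it is obtained from the $p=1$ case applied to $|u|^{\gamma}$ with $\gamma=p(n-1)/(n-p)$, using H\"older's inequality on the gradient side.) Since $\frac1p-\frac1{p^\ast}=\frac1n$, H\"older's inequality then gives
$$
\|u\|_{L^p(\boz)}=\|u\|_{L^p(\supp u)}
\le|\supp u|^{1/n}\,\|u\|_{L^{p^\ast}(\supp u)}
\le|\boz|^{1/n}C_2\,\|\nabla u\|_{L^p(\boz)}.
$$

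Plugging this bound into \eqref{plan-trace} yields the desired inequality with a constant depending only on $n$, $q$, $|\boz|$, the Lipschitz character of $\boz$, and the constant in \eqref{2.2}. The only technically delicate point—and the step I would expect to require the most care in the write-up—is the Sobolev–Poincar\'e inequality \eqref{plan-sp} with the stated dependence on the constant in \eqref{2.2}. This is where the support condition is essential, since without it one would need the full $W^{1,p}$ norm on the right; the relative isoperimetric inequality \eqref{2.1}, which is tailored precisely to sets of measure at most $|\boz|/2$, provides exactly what is needed to carry out the truncation argument and to track the dependence on the geometric constant. Everything else is a routine application of trace theory and H\"older's inequality.
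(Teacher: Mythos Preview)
The paper does not actually prove this lemma: it is simply cited as \cite[Lemma~5.1]{cm14a}. So there is no ``paper's proof'' to match, but your route does not quite deliver the statement as written, and it is worth saying why.

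Your argument is mathematically sound as a proof of \emph{some} trace inequality, but the constant you obtain depends on $n$, $q$, $|\boz|$, the Lipschitz character of $\boz$, and the constant in \eqref{2.2} --- you say so yourself at the end. The lemma, however, asserts dependence only on $n$, $q$, and the constant in \eqref{2.2}, and this is precisely what is used downstream: in \eqref{2.19} and \eqref{2.22} the constant is recorded as depending on the constant in \eqref{2.2}, and that tracking is what makes the approximation argument in Step~6 of Theorem~\ref{t1.1} go through uniformly. You have also misidentified the delicate step: your Sobolev--Poincar\'e inequality \eqref{plan-sp} \emph{does} follow from \eqref{2.1}/\eqref{2.2} with the right dependence via the standard Maz'ya truncation argument; the problematic step is \eqref{plan-trace}, where you invoke the ``classical Sobolev trace theorem'' as a black box. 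That theorem's constant is built from local chart data of $\partial\boz$ and is not controlled by the single constant in \eqref{2.2}.

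The approach in Cianchi--Maz'ya that gives the sharp dependence bypasses the standard trace theorem altogether. One writes the boundary integral via the layer-cake formula as $\int_{\partial\boz}|\mathrm{Tr}\,u|^q\,d\ch^{n-1}=q\int_0^\infty t^{q-1}\ch^{n-1}(\partial\boz\cap\partial\{|u|>t\})\,dt$, applies \eqref{2.2} directly to each superlevel set (legitimate because $|\supp u|\le|\boz|/2$ forces $|\{|u|>t\}|\le|\boz|/2$), and then converts the resulting interior perimeter integral into a gradient integral via the coarea formula. In other words, the support hypothesis and \eqref{2.2} are used to manufacture the trace estimate itself, not merely to kill the zero-order term afterwards; that is how the constant ends up depending only on what is claimed.
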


Now we prove Lemma \ref{l2.8} by using Lemmas \ref{l2.5}, \ref{l2.6}, \ref{l2.7} and \ref{l2.9}.

\begin{proof}[Proof of Lemma \ref{l2.8}]
To finish the proof of Lemma \ref{l2.8}, we borrow some ideas from the proof of
\cite[Lemma 5.4]{cm14a}. It is easy to see that, for all $t\in(0,\fz)$,
the level set $\{|\nabla u|>t\}$ is open and
\begin{eqnarray}\label{2.11}
\partial\{|\nabla u|>t\}=\{|\nabla u|=t\}\cup(\partial\boz\cap\partial\{|\nabla u|>t\}).
\end{eqnarray}
Moreover, by Lemma \ref{l2.5} and the divergence theorem,
we conclude that, for almost every $t\in(0,\fz)$,
\begin{eqnarray}\label{2.12}
&&\int_{\{|\nabla u|>t\}}[\Delta u(x)]^2\,dx\\ \nonumber
&&\hs=\int_{\{|\nabla u|>t\}}\mathrm{div}(\Delta u(x)\nabla u(x))\,dx
-\int_{\{|\nabla u|>t\}}\sum_{i,\,j=1}^n\lf(u_{x_ix_j}(x)u_{x_i}(x)\r)_{x_j}\,dx\\ \nonumber
&&\hs\hs+\int_{\{|\nabla u|>t\}}\lf|\nabla^2u(x)\r|^2\,dx\\ \nonumber
&&\hs=\int_{\partial\{|\nabla u|>t\}}\Delta u(x)\frac{\partial u(x)}{\partial\nu}\,d\ch^{n-1}
-\int_{\partial\{|\nabla u|>t\}}\sum_{i,\,j=1}^nu_{x_ix_j}(x)u_{x_i}(x)\nu_j\,d\ch^{n-1}\\ \nonumber
&&\hs\hs+\int_{\{|\nabla u|>t\}}\lf|\nabla^2u(x)\r|^2\,dx,
\end{eqnarray}
which, combined with \eqref{2.11} and the fact that, for almost every $t\in(0,\fz)$,
$$\nu=-\frac{\nabla|\nabla u|}{|\nabla|\nabla u||}\quad\text{on}\  \{|\nabla u|=t\}
$$
and $\sum_{i=1}^nu_{x_ix_j}u_{x_i}=|\nabla u|_{x_j}|\nabla u|$, further implies that
\begin{eqnarray}\label{2.13}
&&\int_{\partial\{|\nabla u|>t\}}\Delta u(x)\frac{\partial u(x)}{\partial\nu}\,d\ch^{n-1}
-\int_{\partial\{|\nabla u|>t\}}\sum_{i,\,j=1}^nu_{x_ix_j}(x)u_{x_i}(x)\nu_j\,d\ch^{n-1}\\ \nonumber
&&\hs=\int_{\{|\nabla u|=t\}}\Delta u(x)\frac{\partial u(x)}{\partial\nu}\,d\ch^{n-1}+t
\int_{\{|\nabla u|=t\}}|\nabla|\nabla u(x)||\,d\ch^{n-1}\\ \nonumber
&&\hs\hs+\int_{\partial\boz\cap\partial\{|\nabla u|>t\}}\lf[\Delta u(x)\frac{\partial u(x)}{\partial\nu}
-\sum_{i,\,j=1}^nu_{x_ix_j}(x)u_{x_i}(x)\nu_j\r]\,d\ch^{n-1}.
\end{eqnarray}
Furthermore, from \cite[(3.1.1.8)]{g85}, it follows that on $\partial\boz$,
\begin{eqnarray}\label{2.14}
\Delta u\frac{\partial u}{\partial\nu}-\sum_{i,\,j=1}^nu_{x_ix_j}u_{x_i}\nu_j
&&=\mathrm{div}_T\lf(\frac{\partial u}{\partial\nu}\nabla_T u\r)
-\mathrm{tr}\cb\lf(\frac{\partial u}{\partial\nu}\r)^2\\ \nonumber
&&\hs-\cb(\nabla_T u, \nabla_T u)
-2\nabla_T u\cdot\nabla_T\frac{\partial u}{\partial\nu},
\end{eqnarray}
where $\mathrm{div}_T$ and $\nabla_T$ denote the divergence operator and the gradient operator on $\partial\boz$,
which, together with the assumption $u=0$ on $\partial\boz$, implies that
on $\partial\boz$,
\begin{eqnarray}\label{2.15}
\Delta u\frac{\partial u}{\partial\nu}-\sum_{i,\,j=1}^nu_{x_ix_j}u_{x_i}\nu_j
=-\mathrm{tr}\cb\lf(\frac{\partial u}{\partial\nu}\r)^2.
\end{eqnarray}
By this, we find that, for almost every $t\in(0,\fz)$,
\begin{eqnarray}\label{2.16}
&&\int_{\partial\boz\cap\partial\{|\nabla u|>t\}}\lf[\Delta u(x)\frac{\partial u(x)}{\partial\nu}
-\sum_{i,\,j=1}^nu_{x_ix_j}(x)u_{x_i}(x)\nu_j\r]\,d\ch^{n-1}\\ \nonumber
&&\hs\ge-\int_{\partial\boz\cap\partial\{|\nabla u|>t\}}|\nabla u(x)|^2|\mathrm{tr}\cb(x)|\,d\ch^{n-1}
\end{eqnarray}
Moreover, it is easy to see that
\begin{eqnarray*}
\lf|\int_{\{|\nabla u|=t\}}\Delta u(x)\frac{\partial u(x)}{\partial\nu}\,d\ch^{n-1}\r|\le
t\int_{\{|\nabla u|=t\}}|\Delta u(x)|\,d\ch^{n-1},
\end{eqnarray*}
which, combined with \eqref{2.12}, \eqref{2.13}, \eqref{2.16} and \eqref{2.17}, further implies that,
for almost every $t\in(0,\fz)$,
\begin{eqnarray}\label{2.17}
&&t\int_{\{|\nabla u|=t\}}|\nabla|\nabla u(x)||\,d\ch^{n-1}+\int_{\{|\nabla u|>t\}}\lf|\nabla^2u(x)\r|^2\,dx\\ \nonumber
&&\hs\le t\int_{\{|\nabla u|=t\}}|\Delta u(x)|\,d\ch^{n-1}+\int_{\{|\nabla u|>t\}}|\Delta u(x)|^2\,dx\\ \nonumber
&&\hs\hs+\int_{\partial\boz\cap\partial\{|\nabla u|>t\}}|\nabla u(x)|^2|\mathrm{tr}\cb(x)|\,d\ch^{n-1}.
\end{eqnarray}
By this, we conclude that \eqref{2.7} holds true.

Now we prove \eqref{2.8}. From H\"older's inequality, we deduce that, for all $t\in(0,\fz)$,
\begin{eqnarray}\label{2.18}
&&\int_{\partial\boz\cap\partial\{|\nabla u|>t\}}|\nabla u(x)|^2|\mathrm{tr}\cb(x)|\,d\ch^{n-1}\\ \nonumber
&&\hs\le2t^2\int_{\partial\boz\cap\partial\{|\nabla u|>t\}}|\mathrm{tr}\cb(x)|\,d\ch^{n-1}\\ \nonumber
&&\hs\hs+2\int_{\partial\boz\cap\partial\{|\nabla u|>t\}}\lf(|\nabla u(x)|-t\r)^2|\mathrm{tr}\cb(x)|\,d\ch^{n-1}.
\end{eqnarray}
For simplicity, denote by $\mu$ the distribution function $\mu_{|\nabla u|}$ of $|\nabla u|$.
Let $\dz:=(n-1)/nr'-(n-2)/n$. By $r>n-1$, we know that $\dz>0$,
which, together with the facts $\max\{|\nabla u|-t,\,0\}\in W^{1,\,2}(\boz)$ and $|\nabla|\nabla u||\le|\nabla^2 u|$,
H\"older's inequality and Lemma \ref{l2.9}, implies that,
for any $t\in[|\nabla u|^\ast(|\boz|/2),\fz)$,
\begin{eqnarray}\label{2.19}
\hs\hs&&\int_{\partial\boz\cap\partial\{|\nabla u|>t\}}\lf(|\nabla u(x)|-t\r)^2|\mathrm{tr}\cb(x)|\,d\ch^{n-1}\\ \nonumber
&&\hs\le\lf[\int_{\partial\boz\cap\partial\{|\nabla u|>t\}}\lf(|\nabla u(x)|-t\r)^{2r'}\,d\ch^{n-1}\r]^{1/r'}
\lf[\int_{\partial\boz\cap\partial\{|\nabla u|>t\}}|\mathrm{tr}\cb(x)|^{r}\,d\ch^{n-1}\r]^{1/r}\\ \nonumber
&&\hs\le C_2[\mu(t)]^{\dz}\|\mathrm{tr}\cb\|_{L^r(\partial\boz)}\int_{\{|\nabla u|>t\}}\lf|\nabla^2 u(x)\r|^2\,dx,
\end{eqnarray}
where $C_2$ is a positive constant depending on $r$ and the constant in \eqref{2.2}.
Let
$$\bz:=\lf[\frac{1}{2C_2\|\mathrm{tr}\cb\|_{L^r(\partial\boz)}}\r]^{1/\dz},
$$
$\az:=\min\{\bz/|\boz|,1/2\}$ and $t_u:=|\nabla u|^\ast(\az|\boz|)$. Then
it follows, from Lemma \ref{l2.6}, that, for any $t\in[t_u,\fz)$,
\begin{eqnarray*}
1-2C_2[\mu(t)]^{\dz}\|\mathrm{tr}\cb\|_{L^r(\partial\boz)}\ge0,
\end{eqnarray*}
which, combined with \eqref{2.17}, \eqref{2.18} and \eqref{2.19}, further implies that \eqref{2.8} holds true.

Now we prove \eqref{2.9}. For any $x\in\boz$, denote by $T_x\partial\boz$ the $(n-1)$-dimensional tangent plane
to $\partial\boz$ at $x$. Recall that, for any $x\in\partial\boz$,
the second fundamental form $\cb(x)$ of $\partial\boz$ at $x$ is the bilinear map
on $T_x\partial\boz\times T_x\partial\boz$ given by, for any $\xi,\,\eta\in T_x\partial\boz$,
\begin{eqnarray}\label{2.20}
-\cb(x)(\xi,\eta):=(\nabla_T v(x)\xi)\cdot\eta,
\end{eqnarray}
where $\nabla_T$ denote the gradient operator on $\partial\boz$.
Extend the bilinear map in \eqref{2.20} to $\rn\times\rn$ by demanding
that a pair of vectors $(\xi,\eta)$ is mapped to zero if any of them is normal.
Then as pointed in \cite[Definition 2.3]{mmmy10},
the Weingarten matrix of $\partial\boz$ is then the $n\times n$ matrix associated with the extension.
Thus, schematically,
$W=\nabla_T\nu$ on $\partial\boz$ (with the understanding that the tangential
gradient acts on the components of $\nu$), which, together with
Lemma \ref{l2.7} and the definition of $\mathrm{tr}\cb$, implies that, there exists
a positive constant $C_3$, depending on the Lipschitz character and
the uniform ball constant of $\boz$, such that, for all $x\in\partial\boz$, $\mathrm{tr}\cb(x)\le C_3$,
which, combined with \eqref{2.15}, implies that,
for almost every $t\in(0,\fz)$,
\begin{eqnarray}\label{2.21}
&&\int_{\partial\boz\cap\partial\{|\nabla u|>t\}}\lf[\Delta u(x)\frac{\partial u(x)}{\partial\nu}
-\sum_{i,\,j=1}^nu_{x_ix_j}(x)u_{x_i}(x)\nu_j\r]\,d\ch^{n-1}\\ \nonumber
&&\hs\ge-C_3\int_{\partial\boz\cap\partial\{|\nabla u|>t\}}|\nabla u(x)|^2\,d\ch^{n-1}.
\end{eqnarray}
By replacing \eqref{2.16} with \eqref{2.21} and repeating the proof of \eqref{2.17},
we conclude that, for almost every $t\in(0,\fz)$,
\begin{eqnarray*}
&&t\int_{\{|\nabla u|=t\}}|\nabla|\nabla u(x)||\,d\ch^{n-1}+\int_{\{|\nabla u|>t\}}\lf|\nabla^2u(x)\r|^2\,dx\\ \nonumber
&&\hs\le t\int_{\{|\nabla u|=t\}}|\Delta u(x)|\,d\ch^{n-1}+\int_{\{|\nabla u|>t\}}|\Delta u(x)|^2\,dx\\ \nonumber
&&\hs\hs+C_3\int_{\partial\boz\cap\partial\{|\nabla u|>t\}}|\nabla u(x)|^2\,d\ch^{n-1},
\end{eqnarray*}
which further implies that \eqref{2.9} holds true.

Finally, we prove \eqref{2.10}.  From H\"older's inequality, we deduce that, for all $t\in(0,\fz)$,
\begin{eqnarray*}
\hs\hs&&\int_{\partial\boz\cap\partial\{|\nabla u|>t\}}|\nabla u(x)|^2\,d\ch^{n-1}\\ \nonumber
&&\hs\le2\int_{\partial\boz\cap\partial\{|\nabla u|>t\}}\lf(|\nabla u(x)|-t\r)^2\,d\ch^{n-1}
+2t^2\ch^{n-1}(\partial\boz\cap\partial\{|\nabla u|>t\}).
\end{eqnarray*}
Moreover, repeating the proof of \eqref{2.19},
we know that, for any $t\in[|\nabla u|^\ast(|\boz|/2),\fz)$,
\begin{eqnarray}\label{2.22}
&&\int_{\partial\boz\cap\partial\{|\nabla u|>t\}}\lf(|\nabla u(x)|-t\r)^2\,d\ch^{n-1}\\ \nonumber
&&\hs\le C_4[\mu(t)]^{\dz}\lf[\ch^{n-1}(\partial\boz\cap\partial\{|\nabla u|>t\})\r]^{1/r}
\int_{\{|\nabla u|>t\}}\lf|\nabla^2 u(x)\r|^2\,dx,
\end{eqnarray}
where $\dz$ is as in \eqref{2.19} and $C_4$ is a positive constant depending on $r$
and the constant in \eqref{2.2}.
Taking
$$\bz:=\lf\{\frac{1}{2C_4[\ch^{n-1}(\partial\boz)]^{1/r}}\r\}^{1/\dz},
$$
$\az:=\min\{\bz/|\boz|,1/2\}$ and $t_u:=|\nabla u|^\ast(\az|\boz|)$,
replacing \eqref{2.19} with \eqref{2.22} and repeating the proof of \eqref{2.8},
we conclude that \eqref{2.10} holds true. This finishes the proof of Lemma \ref{l2.8}.
\end{proof}

\begin{lemma}\label{l2.10}
Let $n\ge3$ and $\boz\subset\rn$ be a bounded domain with $\partial\boz\in C^2$.
Assume that $u\in C^\fz(\boz)\cap C^2(\overline{\boz})$
satisfies $\frac{\partial u}{\partial\nu}=0$ on $\partial\boz$.
For any $x\in\partial\boz$, let
$$\wz{\cb}(x):=\sup_{0\neq v\in\rr^{n-1}}\frac{\cb(x)(v,v)}{|v|^2}.
$$
Then for almost every $t\in(0,\fz)$,
\begin{eqnarray}\label{2.23}
\hs\hs t\int_{\{|\nabla u|=t\}}|\nabla|\nabla u(x)||\,d\ch^{n-1}&&\le t\int_{\{|\nabla u|=t\}}|\Delta u(x)|\,d\ch^{n-1}
+\int_{\{|\nabla u|>t\}}|\Delta u(x)|^2\,dx\\ \nonumber
&&\hs+\|\nabla u\|^2_{L^\fz(\boz)}\int_{\partial\boz\cap\partial\{|\nabla u|>t\}}|\wz{\cb}(x)|\,d\ch^{n-1}.
\end{eqnarray}
Moreover, if $r\in(n-1,\fz)$, then for almost every $t\in[t_u,\fz)$,
\begin{eqnarray}\label{2.24}
\hs\hs t\int_{\{|\nabla u|=t\}}|\nabla|\nabla u(x)||\,d\ch^{n-1}&&\le
t\int_{\{|\nabla u|=t\}}|\Delta u(x)|\,d\ch^{n-1}+\int_{\{|\nabla u|>t\}}|\Delta u(x)|^2\,dx\\ \nonumber
&&\hs+2t^2\int_{\partial\boz\cap\partial\{|\nabla u|>t\}}|\wz{\cb}(x)|\,d\ch^{n-1},
\end{eqnarray}
where $t_u:=|\nabla u|^\ast(\az\boz)$ with $\az\in(0,1/2]$ being a constant depending on $n$,
$r$, $\|\wz{\cb}\|_{L^r(\partial\boz)}$, $\boz$ and the constant in \eqref{2.2}.

If $\boz$ is semi-convex, then there exists a positive constant $C$, depending on the Lipschitz character and
the uniform ball constant of $\boz$, such that, for almost every $t\in(0,\fz)$,
\begin{eqnarray}\label{2.25}
\hs\hs t\int_{\{|\nabla u|=t\}}|\nabla|\nabla u(x)||\,d\ch^{n-1}&&\le
t\int_{\{|\nabla u|=t\}}|\Delta u(x)|\,d\ch^{n-1}+\int_{\{|\nabla u|>t\}}|\Delta u(x)|^2\,dx\\ \nonumber
&&\hs+C\|\nabla u\|^2_{L^\fz(\boz)}\ch^{n-1}(\partial\boz\cap\partial\{|\nabla u|>t\}).
\end{eqnarray}
Furthermore, there exist positive constants $\az\in(0,1/2]$,
depending on $n$, $\boz$ and the constant in \eqref{2.2}, and $C$,
depending on the Lipschitz character and
the uniform ball constant of $\boz$, such that, for almost every $t\in[t_u,\fz)$, where
$t_u:=|\nabla u|^\ast(\az\boz)$,
\begin{eqnarray}\label{2.26}
\hs\hs t\int_{\{|\nabla u|=t\}}|\nabla|\nabla u(x)||\,d\ch^{n-1}&&\le
t\int_{\{|\nabla u|=t\}}|\Delta u(x)|\,d\ch^{n-1}+\int_{\{|\nabla u|>t\}}|\Delta u(x)|^2\,dx\\ \nonumber
&&\hs+Ct^2\ch^{n-1}(\partial\boz\cap\partial\{|\nabla u|>t\}).
\end{eqnarray}
\end{lemma}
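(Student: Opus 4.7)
The plan is to mirror the proof of Lemma \ref{l2.8} and to isolate the single point where the Neumann condition behaves differently from the Dirichlet condition. All steps preceding the use of the Grisvard identity \eqref{2.14}---the integration-by-parts identities \eqref{2.12}--\eqref{2.13} based on Lemma \ref{l2.5}, the decomposition \eqref{2.11} of $\partial\{|\nabla u|>t\}$, the computation that converts the surface integrand on $\{|\nabla u|=t\}$ into $t|\nabla|\nabla u||$ via $\nu=-\nabla|\nabla u|/|\nabla|\nabla u||$, and the trivial bound $|\int_{\{|\nabla u|=t\}}\Delta u\,\partial u/\partial\nu\,d\ch^{n-1}|\le t\int_{\{|\nabla u|=t\}}|\Delta u|\,d\ch^{n-1}$---are independent of the boundary data and will be reused verbatim.

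The new ingredient enters through \eqref{2.14}. Under $\partial u/\partial\nu=0$ on $\partial\boz$, the first, second and fourth summands on the right-hand side of \eqref{2.14} all vanish, leaving only $-\cb(\nabla_T u,\nabla_T u)$. The very definition of $\wz{\cb}$ then gives $\cb(x)(\nabla_T u,\nabla_T u)\le \wz{\cb}(x)|\nabla_T u|^2\le|\wz{\cb}(x)|\,|\nabla u(x)|^2$, which produces the Neumann analogue of \eqref{2.16}; inserting it into the analogue of \eqref{2.17} and bounding $|\nabla u|^2\le\|\nabla u\|^2_{L^\fz(\boz)}$ in the surface integral immediately yields \eqref{2.23}.

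To obtain \eqref{2.24}, I plan to repeat the argument for \eqref{2.8}: split $|\nabla u|^2\le 2(|\nabla u|-t)^2+2t^2$ on $\partial\boz\cap\partial\{|\nabla u|>t\}$, apply H\"older's inequality with exponents $r$ and $r'$, and use Lemma \ref{l2.9} applied to $\max\{|\nabla u|-t,0\}$ (whose support has measure at most $|\boz|/2$ for $t\ge|\nabla u|^\ast(|\boz|/2)$) to bring in a factor $C[\mu(t)]^\dz\|\wz{\cb}\|_{L^r(\partial\boz)}\int_{\{|\nabla u|>t\}}|\nabla^2 u|^2\,dx$ with $\dz:=(n-1)/(nr')-(n-2)/n>0$. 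Choosing $\az\in(0,1/2]$ small enough and setting $t_u:=|\nabla u|^\ast(\az|\boz|)$ forces this coefficient to be at most $1/2$ for $t\ge t_u$ (via Lemma \ref{l2.6}), which absorbs the $\int|\nabla^2u|^2$ term into the left-hand side of the analogue of \eqref{2.17}.

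For the semi-convex case, I will invoke Lemma \ref{l2.7} together with the identification $\cb=-W$ as bilinear forms (which reads off the convention for $W$ stated just before Lemma \ref{l2.7} and the second-fundamental-form interpretation of $\cb$) to deduce $\wz{\cb}(x)\le C_1/r_0$ for $\ch^{n-1}$-a.e.\ $x\in\partial\boz$; this upper bound inserted in \eqref{2.23} gives \eqref{2.25}, and running the same H\"older-plus-absorption scheme as above, with $[\ch^{n-1}(\partial\boz)]^{1/r}$ in place of $\|\wz{\cb}\|_{L^r(\partial\boz)}$, yields \eqref{2.26}. The main subtle point will be the sign bookkeeping: one must verify that an upper bound on $\cb$ (equivalently a lower bound on the Weingarten matrix $W$) is exactly what is needed to control $-\cb(\nabla_T u,\nabla_T u)$ from below, and hence to produce an upper estimate on $t\int_{\{|\nabla u|=t\}}|\nabla|\nabla u||\,d\ch^{n-1}$.
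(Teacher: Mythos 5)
Your proposal is correct and follows essentially the same route as the paper: the paper's own proof simply observes that the Neumann condition reduces the Grisvard identity \eqref{2.14} to $-\cb(\nabla_T u,\nabla_T u)$ (its \eqref{2.27}), and then repeats the proof of Lemma \ref{l2.8} verbatim, which is exactly your plan. Your additional remarks on bounding $\cb(\nabla_T u,\nabla_T u)\le|\wz{\cb}|\,|\nabla u|^2$ and on the sign convention $\cb=-W$ in the semi-convex case correctly fill in the details the paper leaves implicit.
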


\begin{proof}
The proof of this lemma is similar to that of Lemma \ref{l2.8}.
By \eqref{2.14} and the assumption that $\frac{\partial u}{\partial\nu}=0$ on $\partial\boz$,
we know that
\begin{eqnarray}\label{2.27}
\Delta u\frac{\partial u}{\partial\nu}-\sum_{i,\,j=1}^nu_{x_ix_j}u_{x_i}\nu_j=-\cb(\nabla_T u, \nabla_T u).
\end{eqnarray}
Replacing \eqref{2.15} with \eqref{2.27} and repeating the proof of Lemma \ref{l2.8},
we conclude that Lemma \ref{l2.10} holds true.
\end{proof}

\section{Proofs of Theorems \ref{t1.1}--\ref{t1.4}\label{s3}}

\hskip\parindent In this section, we give out the proofs of Theorems \ref{t1.1}--\ref{t1.4}.
To finish this, we need the following auxiliary lemma,
which is just \cite[Theorem 1.4.19]{g14}.

\begin{lemma}\label{l3.1}
Let $r\in(0,\fz]$, $p_0,\,p_1\in(0,\fz]$ with $p_0\neq p_1$, $\boz\subset\rn$ be a bounded open set and
$T$ a linear operator defined on the set of simple functions on $\boz$ and taking values
in the set of measurable functions on $\boz$. Assume further that there exists positive constants
$M_0$ and $M_1$ such that, for all measurable subsets $E\subset\boz$,
$$\|T(\chi_E)\|_{L^{p_0,\,\fz}(\boz)}\le M_0|E|^{1/p_0}\ \ \text{and}\ \
\|T(\chi_E)\|_{L^{p_1,\,\fz}(\boz)}\le M_1|E|^{1/p_1}.
$$
For any $\tz\in(0,1)$, let
$$\frac{1}{p}=\frac{1-\tz}{p_0}+\frac{\tz}{p_1}.
$$
Then there exists a positive constant $C$, depending on $p_0,\,p_1,\,M_0,\,M_1,\,r$ and $\tz$,
such that, for all functions $f$ in the domain of $T$ and in $L^{p,\,r}(\boz)$,
$$\|Tf\|_{L^{p,\,r}(\boz)}\le C\|f\|_{L^{p,\,r}(\boz)}.
$$
\end{lemma}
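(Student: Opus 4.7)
The plan is to prove Lemma \ref{l3.1} by adapting the classical Marcinkiewicz--Hunt interpolation argument to Lorentz spaces with restricted weak-type endpoint hypotheses. Without loss of generality assume $p_0<p_1$. For a simple function $f$ on $\boz$ (which will suffice by density and the bound claimed), the idea is to split $f$ into a low part and a high part depending on a threshold $\lambda>0$, apply the endpoint estimate with index $p_1$ to the low part and the estimate with index $p_0$ to the high part, and then optimize over $\lambda$ to obtain a pointwise control on the decreasing rearrangement $(Tf)^{\ast}$.

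The first step is to upgrade the restricted weak-type hypotheses, which a priori control only $T(\chi_E)$, to weak-type bounds on the truncations $g_\lambda:=f\chi_{\{|f|\le\lambda\}}$ and $h^\lambda:=f\chi_{\{|f|>\lambda\}}$. Using the layer-cake identity $|f|=\int_0^\fz\chi_{\{|f|>s\}}\,ds$ one writes
$$g_\lambda=\int_0^\lambda \mathrm{sgn}(f)\,\chi_{\{|f|>s\}}\,ds,\qquad h^\lambda=\int_\lambda^\fz \mathrm{sgn}(f)\,\chi_{\{|f|>s\}}\,ds.$$
Linearity of $T$, together with Minkowski's inequality for the equivalent norm on $L^{p_i,\fz}(\boz)$ (obtained via $u^{\ast\ast}$) and the hypotheses on $T(\chi_E)$, yields
$$\|Tg_\lambda\|_{L^{p_1,\fz}(\boz)}\lesssim M_1\int_0^\lambda[\mu_f(s)]^{1/p_1}\,ds,\qquad \|Th^\lambda\|_{L^{p_0,\fz}(\boz)}\lesssim M_0\int_\lambda^\fz[\mu_f(s)]^{1/p_0}\,ds.$$
Since $\{|Tf|>2\sigma\}\subset\{|Tg_\lambda|>\sigma\}\cup\{|Th^\lambda|>\sigma\}$, Chebyshev's inequality applied in these weak-type norms produces an estimate for $\mu_{Tf}(2\sigma)$ in terms of $\lambda$, $\sigma$, and $\mu_f$. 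Choosing $\lambda$ as the power of $\sigma$ dictated by the identity $1/p=(1-\tz)/p_0+\tz/p_1$ balances the two contributions and, after inverting, produces a pointwise bound of convolution type, schematically
$$(Tf)^{\ast}(t)\lesssim t^{-1/p_0}\int_0^t s^{1/p_0}f^{\ast}(s)\,\frac{ds}{s}+t^{-1/p_1}\int_t^\fz s^{1/p_1}f^{\ast}(s)\,\frac{ds}{s}.$$
Substituting this into \eqref{1.12} and invoking Hardy's inequality for non-increasing functions (which is the ingredient that uses $1/p_0>1/p>1/p_1$ to close the exponents) recovers $\|f\|_{L^{p,\,r}(\boz)}$ on the right-hand side.

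The main obstacle is the first step, namely the upgrade from the restricted weak-type hypothesis to weak-type on truncations: this is where both the linearity of $T$ and the strict inequality $p_0,\,p_1>1$ (ensuring that $L^{p_i,\fz}(\boz)$ carries a genuine norm) are used, and where the explicit dependence of the final constant on $p_0,\,p_1,\,M_0,\,M_1,\,r,\,\tz$ must be tracked carefully through the layer-cake integrals and the optimization in $\lambda$. Once the weak-type estimates are in place, the pointwise bound on $(Tf)^{\ast}$ and the Hardy inequality step are classical, and density of simple functions in $L^{p,\,r}(\boz)$ extends the conclusion to every $f$ in the domain of $T$.
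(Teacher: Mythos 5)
The paper does not prove this lemma at all: it is quoted verbatim as \cite[Theorem 1.4.19]{g14} (Grafakos), so your task was really to reconstruct the classical restricted weak-type interpolation theorem of Stein--Weiss/Hunt, and your outline is indeed that classical argument. Two points deserve correction. First, a small one: the layer-cake identities as written are wrong, since $\int_0^\lambda \mathrm{sgn}(f)\chi_{\{|f|>s\}}\,ds=\mathrm{sgn}(f)\min\{|f|,\lambda\}$ and $\int_\lambda^\fz \mathrm{sgn}(f)\chi_{\{|f|>s\}}\,ds=\mathrm{sgn}(f)(|f|-\lambda)_+$, not $f\chi_{\{|f|\le\lambda\}}$ and $f\chi_{\{|f|>\lambda\}}$. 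The decomposition $f=\mathrm{sgn}(f)\min\{|f|,\lambda\}+\mathrm{sgn}(f)(|f|-\lambda)_+$ serves the same purpose and yields exactly the two integral bounds you state, so this is repairable by renaming $g_\lambda$ and $h^\lambda$; note also that $\mathrm{sgn}(f)\chi_{\{|f|>s\}}=\chi_{\{f>s\}}-\chi_{\{f<-s\}}$ is needed to reduce to the hypothesis on $T(\chi_E)$, costing a factor $2$.

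The substantive issue is the range of exponents. Your key step --- Minkowski's integral inequality for the $u^{\ast\ast}$-norm on $L^{p_i,\fz}(\boz)$ --- is only available when $p_i>1$, as you yourself observe, whereas the lemma is stated for $p_0,p_1\in(0,\fz]$. So as written you prove a strictly weaker statement. This happens to suffice for the only use of the lemma in the paper (where $p_0=q_1$ and $p_1=q_2$ lie in $(1,q_+)$), but it does not prove the lemma as stated. The standard proof for the full range (the one in Grafakos) avoids norms on $L^{p_i,\fz}$ entirely: for a simple function one writes $f$ (say $f\ge0$) as a \emph{finite} sum $\sum_k(a_k-a_{k+1})\chi_{B_k}$ over nested level sets $B_1\subset\cdots\subset B_N$, applies $T$ by linearity, and uses the elementary rearrangement inequality $\bigl(\sum_k g_k\bigr)^\ast\bigl(\sum_k t_k\bigr)\le\sum_k g_k^\ast(t_k)$ together with $(T\chi_{B_k})^\ast(s)\le\min\{M_0|B_k|^{1/p_0}s^{-1/p_0},M_1|B_k|^{1/p_1}s^{-1/p_1}\}$; optimizing the splitting $t=\sum_k t_k$ produces the same Calder\'on-operator bound on $(Tf)^\ast$ that you reach, after which your Hardy-inequality step is valid for all $0<p_0<p<p_1\le\fz$ and $0<r\le\fz$. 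Finally, the last sentence about density is superfluous: the domain of $T$ consists precisely of simple functions, so the conclusion is only asserted for them.
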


We now prove Theorem \ref{t1.1} by using Lemma \ref{l3.1}.

\begin{proof}[Proof of Theorem \ref{t1.1}]
To prove Theorem \ref{t1.1}, we borrow some ideas
from the proof of \cite[Theorem 2.1]{cm14a}.
We split the proof of this theorem in the following six steps.

\textbf{Step 1.} We first assume that $\partial\boz\in C^\fz$.
By $f\in L^{n,\,1}(\boz)$, we see that $f\in L^2(\boz)$. Then from Remark \ref{r1.1}(iii),
it follows that the weak solution $u$ of \eqref{1.2} belongs to $W^{1,\,2}_0(\boz)\cap W^{2,\,2}(\boz)$.
By the standard approximation, we know that there exists a sequence $\{u_k\}_{k\in\nn}
\subset C^\fz(\boz)\cap C^2(\overline{\boz})$ such that, for any $k\in\nn$, $u_k=0$ on $\partial\boz$,
\begin{eqnarray}\label{3.1}
&&u_k\to u\ \text{in}\ W^{1,\,2}_0(\boz),\ \ u_k\to u\ \text{in}\ W^{2,\,2}(\boz),\\ \nonumber
&&\nabla u_k\to\nabla u\ \ \text{almost everywhere in}\ \boz,
\end{eqnarray}
as $k\to\fz$, which, combined with H\"older's inequality and Sobolev's inequality, further implies that
\begin{eqnarray*}
\|\Delta u_k-Vu_k+f\|_{L^2(\boz)}&&\le\|\Delta(u_k-u)\|_{L^2(\boz)}+\|V(u_k-u)\|_{L^2(\boz)}\\
&&\le\lf\|\nabla^2(u_k-u)\r\|_{L^2(\boz)}+\|V\|_{L^n(\boz)}\|u_k-u\|_{L^{2n/(n-2)}(\boz)}\\
&&\ls\lf\|u_k-u\r\|_{W^{2,\,2}(\boz)}+\|V\|_{L^n(\boz)}\|u_k-u\|_{W^{1,\,2}(\boz)}\to0,
\end{eqnarray*}
as $k\to\fz$.  By this, we conclude that
\begin{eqnarray}\label{3.2}
-\Delta u_k+Vu_k\to f\ \ \text{in}\ L^2(\boz),
\end{eqnarray}
as $k\to\fz$.

\textbf{Step 2.} Let $\{u_k\}_{k\in\nn}$ be the sequence as in Step 1. For each $k\in\nn$,
$u_k$ satisfies the same assumptions as the function $u$ in Lemma \ref{l2.8}. Thus, from \eqref{2.8},
we deduce that, for each $k\in\nn$,
\begin{eqnarray}\label{3.3}
&&t\int_{\{|\nabla u_k|=t\}}|\nabla|\nabla u_k(x)||\,d\ch^{n-1}\\ \nonumber
&&\hs\le t\int_{\{|\nabla u_k|=t\}}|\Delta u_k(x)|\,d\ch^{n-1}+\int_{\{|\nabla u_k|>t\}}|\Delta u_k(x)|^2\,dx\\ \nonumber
&&\hs\hs+2t^2\int_{\partial\boz\cap\partial\{|\nabla u_k|>t\}}|\mathrm{tr}\cb(x)|\,d\ch^{n-1},
\end{eqnarray}
where $t_{u_k}$ is defined analogously to $t_u$ as in Lemma \ref{l2.8}. By using \eqref{3.3} and
\eqref{3.1}, similar to the proof of \cite[(6.16)]{cm14a}, we obtain that, for almost every $t\in(t_u,\fz)$,
\begin{eqnarray}\label{3.4}
&&t\int_{\{|\nabla u|=t\}}|\nabla|\nabla u(x)||\,d\ch^{n-1}\\ \nonumber
&&\hs\le t\int_{\{|\nabla u|=t\}}|f(x)-V(x)u(x)|\,d\ch^{n-1}+\int_{\{|\nabla u|>t\}}|f(x)-V(x)u(x)|^2\,dx\\ \nonumber
&&\hs\hs+2t^2\int_{\partial\boz\cap\partial\{|\nabla u|>t\}}|\mathrm{tr}\cb(x)|\,d\ch^{n-1}.
\end{eqnarray}

\textbf{Step 3.} In this step, we show that, for any given $r\in(n-1,\fz)$,
\begin{eqnarray}\label{3.5}
\|\nabla u\|_{L^\fz(\boz)}\le C\|f-Vu\|_{L^{n,\,1}(\boz)},
\end{eqnarray}
where $C$ is a positive constant depending on $n$, $r$, $\|\mathrm{tr}\cb\|_{L^r(\partial\boz)}$,
$\boz$ and the constant in \eqref{2.2}.

By the Hardy-Littlewood inequality \eqref{1.11}, we find that, for almost every $t\in(0,\fz)$,
\begin{eqnarray}\label{3.6}
\int_{\partial\boz\cap\partial\{|\nabla u|>t\}}|\mathrm{tr}\cb(x)|\,d\ch^{n-1}\le
\int_0^{\ch^{n-1}(\partial\boz\cap\partial\{|\nabla u|>t\})}(\mathrm{tr}\cb)^\ast(r)\,dr.
\end{eqnarray}
Moreover, it follows, from $|\nabla u|\in W^{1,\,2}(\boz)$, that, for almost every $t\in(0,\fz)$,
$$
\boz\cap\partial\{|\nabla u|>t\}=\{|\nabla u|=t\} \ \text{up to sets of}\ \ch^{n-1}\ \text{measure zero}
$$
(see, for example, \cite{bz88}), which, together with \eqref{2.2}, implies that,
for almost every $t\in[|\nabla u|^\ast(|\boz|/2),\fz)$,
\begin{eqnarray}\label{3.7}
\ch^{n-1}(\partial\boz\cap\partial\{|\nabla u|>t\})\le C\ch^{n-1}(\{|\nabla u|=t\}),
\end{eqnarray}
where $C$ is as in \eqref{2.2}. Denote simply by $\mu$ the distribution function $\mu_{|\nabla u|}$.
Then by \eqref{2.1}, we conclude that, for almost every $t\in[|\nabla u|^\ast(|\boz|/2),\fz)$,
\begin{eqnarray*}
[\mu(t)]^{1/n'}\le C\ch^{n-1}(\{|\nabla u|=t\}),
\end{eqnarray*}
which, combined with \eqref{3.6} and \eqref{3.7}, implies that,
for almost every $t\in[|\nabla u|^\ast(|\boz|/2),\fz)$,
\begin{eqnarray*}
\hs\hs\int_{\partial\boz\cap\partial\{|\nabla u|>t\}}|\mathrm{tr}\cb(x)|\,d\ch^{n-1}
&&\le\int_0^{C\ch^{n-1}(\{|\nabla u|=t\})}(\mathrm{tr}\cb)^\ast(r)\,dr\\ \nonumber
&&=C\ch^{n-1}(\{|\nabla u|=t\})(\mathrm{tr}\cb)^{\ast\ast}(C\ch^{n-1}(\{|\nabla u|=t\}))\\ \nonumber
&&\le C\ch^{n-1}(\{|\nabla u|=t\})(\mathrm{tr}\cb)^{\ast\ast}([\mu(t)]^{1/n'}).
\end{eqnarray*}
From this and \eqref{3.4}, we deduce that, for almost every $t\in(t_u,\fz)$,
\begin{eqnarray}\label{3.8}
&&t\int_{\{|\nabla u|=t\}}|\nabla|\nabla u(x)||\,d\ch^{n-1}\\ \nonumber
&&\hs\le t\int_{\{|\nabla u|=t\}}|f(x)-V(x)u(x)|\,d\ch^{n-1}+\int_{\{|\nabla u|>t\}}|f(x)-V(x)u(x)|^2\,dx\\ \nonumber
&&\hs\hs+2Ct^2\ch^{n-1}(\{|\nabla u|=t\})(\mathrm{tr}\cb)^{\ast\ast}([\mu(t)]^{1/n'}).
\end{eqnarray}
Moreover, by H\"older's inequality, \eqref{2.5} and \eqref{2.4}, we know that, for almost every $t\in(0,\fz)$,
\begin{eqnarray}\label{3.9}
\hs\hs&&\int_{\{|\nabla u|=t\}}|f(x)-V(x)u(x)|\,d\ch^{n-1}\\ \nonumber
&&\hs\le\lf\{\int_{\{|\nabla u|=t\}}
\frac{|f(x)-V(x)u(x)|^2}{|\nabla|\nabla u(x)||}\,d\ch^{n-1}\r\}^{1/2}\lf\{\int_{\{|\nabla u|=t\}}
|\nabla|\nabla u(x)||\,d\ch^{n-1}\r\}^{1/2}\\ \nonumber
&&\hs\le\lf\{-\frac{d}{dt}\int_{\{|\nabla u|>t\}}
|f(x)-V(x)u(x)|^2\,dx\r\}^{1/2}\lf\{-\frac{d}{dt}\int_{\{|\nabla u|>t\}}
|\nabla|\nabla u(x)||^2\,dx\r\}^{1/2}.
\end{eqnarray}
An argument similar to \eqref{3.9} yields that, for almost every $t\in(0,\fz)$,
\begin{eqnarray}\label{3.10}
\ch^{n-1}(\{|\nabla u|=t\})\le[-\mu'(t)]^{1/2}\lf\{-\frac{d}{dt}\int_{\{|\nabla u|>t\}}
|\nabla|\nabla u(x)||^2\,dx\r\}^{1/2}.
\end{eqnarray}
Furthermore, from the Hardy-Littlewood inequality \eqref{1.11}, it follows that
\begin{eqnarray}\label{3.11}
\int_{\{|\nabla u|>t\}}
|f(x)-V(x)u(x)|^2\,dx\le\int_0^{\mu(t)}\lf[|f-Vu|^{\ast}(r)\r]^2\,dr.
\end{eqnarray}
By \eqref{2.4}, \eqref{3.8}, \eqref{3.9}, \eqref{3.10}, \eqref{3.11} and \eqref{2.6},
we conclude that, for almost every $t\in(t_u,\fz)$,
\begin{eqnarray*}
&&t\lf[-\frac{d}{dt}\int_{\{|\nabla u|>t\}}
|\nabla|\nabla u(x)||^2\,dx\r]\\ \nonumber
&&\hs\le t\lf[-\frac{d}{dt}\int_{\{|\nabla u|>t\}}
|f(x)-V(x)u(x)|^2\,dx\r]^{1/2}\lf[-\frac{d}{dt}\int_{\{|\nabla u|>t\}}
|\nabla|\nabla u(x)||^2\,dx\r]^{1/2}\\ \nonumber
&&\hs\hs+[-\mu'(t)]^{1/2}[\mu(t)]^{-1/n'}\int_0^{\mu(t)}[|f-Vu|^\ast(r)]^2\,dr\lf[-\frac{d}{dt}\int_{\{|\nabla u|>t\}}
|\nabla|\nabla u(x)||^2\,dx\r]^{1/2}\\ \nonumber
&&\hs\hs+Ct^2[-\mu'(t)]^{1/2}(\mathrm{tr}\cb)^{\ast\ast}([\mu(t)]^{1/n'})
\lf[-\frac{d}{dt}\int_{\{|\nabla u|>t\}}
|\nabla|\nabla u(x)||^2\,dx\r]^{1/2},
\end{eqnarray*}
which, together with \eqref{2.6} again, further implies that, for almost every $t\in(t_u,\fz)$,
\begin{eqnarray}\label{3.12}
&&t\le t[-\mu'(t)]^{1/2}[\mu(t)]^{-1/n'}\lf[-\frac{d}{dt}\int_{\{|\nabla u|>t\}}
|f(x)-V(x)u(x)|^2\,dx\r]^{1/2}\\ \nonumber
&&\hs\hs-\mu'(t)[\mu(t)]^{-2/n'}\int_0^{\mu(t)}[|f-Vu|^\ast(r)]^2\,dr\\ \nonumber
&&\hs\hs-Ct^2\mu'(t)[\mu(t)]^{-1/n'}(\mathrm{tr}\cb)^{\ast\ast}([\mu(t)]^{1/n'}).
\end{eqnarray}
Furthermore, from the fact that $|\nabla u|\in W^{1,\,2}(\boz)$, we deduce that $|\nabla u|^\ast$ is continuous
and $|\nabla u|^\ast(\mu(t))=t$ for all $t\in(0,\fz)$ (see, for example, \cite[Lemma 6.6]{ceg96}).
Define the function $\phi_V:(0,|\boz|)\to[0,\fz)$ as, for $s\in(0,|\boz|)$,
$$\phi_V(s):=\lf\{\frac{d}{ds}\int_{\{|\nabla u|>|\nabla u|^\ast(s)\}}|f(x)-V(x)u(x)|^2\,dx\r\}^{1/2}.
$$
Then, for almost every $t\in(0,\fz)$,
\begin{eqnarray}\label{3.13}
\lf\{-\frac{d}{dt}\int_{\{|\nabla u|>t\}}|f(x)-V(x)u(x)|^2\,dx\r\}^{1/2}=[-\mu'(t)]^{1/2}\phi_V(\mu(t)),
\end{eqnarray}
which, combined with \eqref{3.12}, further implies that, for almost $t\in(t_u,\fz)$,
\begin{eqnarray}\label{3.14}
t&&\le-t\mu'(t)[\mu(t)]^{-1/n'}\phi_V(\mu(t))-\mu'(t)[\mu(t)]^{-2/n'}\int_0^{\mu(t)}\lf[|f-Vu|^\ast(r)\r]^2\,dr\\ \nonumber
&&\hs-Ct^2\mu'(t)[\mu(t)]^{-1/n'}(\mathrm{tr}\cb)^{\ast\ast}([\mu(t)]^{1/n'}).
\end{eqnarray}
Let $t_u\le t_0<T<\|\nabla u\|_{L^\fz(\boz)}$. Then by \eqref{3.14}, we conclude that
\begin{eqnarray}\label{3.15}
T^2&&\le t_0^2+2\int_{t_0}^Tt(-\mu'(t))[\mu(t)]^{-1/n'}\phi_V(\mu(t))\,dt\\ \nonumber
&&\hs+2\int_{t_0}^T(-\mu'(t))[\mu(t)]^{-2/n'}\int_0^{\mu(t)}\lf[|f-Vu|^\ast(r)\r]^2\,dr\,dt\\ \nonumber
&&\hs+2C\int_{t_0}^Tt^2(-\mu'(t))[\mu(t)]^{-1/n'}(\mathrm{tr}\cb)^{\ast\ast}([\mu(t)]^{1/n'})\,dt\\ \nonumber
&&\le t_0^2+2T\int_{0}^{\mu(t_0)}s^{-1/n'}\phi_V(s)\,ds
+2\int_{0}^{\mu(t_0)}s^{-2/n'}\int_0^s\lf[|f-Vu|^\ast(r)\r]^2\,dr\,ds\\ \nonumber
&&\hs+2CT^2\int_{0}^{\mu(t_0)^{1/n'}}(\mathrm{tr}\cb)^{\ast\ast}(s)s^{(2-n)/(n-1)}\,ds.
\end{eqnarray}
Let $G:\,[0,\fz)\to[0,\fz)$ is defined by, for $s\in[0,\fz)$,
$$G(s):=C\int_0^{s^{1/n'}}(\mathrm{tr}\cb)^{\ast\ast}(r)r^{(2-n)/(n-1)}\,dr.
$$
Choose $s_0:=\min\{\az|\boz|, G^{-1}(1/4C)\}$ and $t_0:=|\nabla u|^\ast(s_0)$.  Then $t_0\ge t_u$.
From Lemma \ref{l2.6}, it follows that $\mu(t_0)=\mu(|\nabla u|^\ast(s_0))\le s_0\le G^{-1}(1/4C)$, and hence
$$C\int_{0}^{\mu(t_0)^{1/n'}}(\mathrm{tr}\cb)^{\ast\ast}(s)s^{(2-n)/(n-1)}\,ds\le\frac{1}{4},
$$
which, together with \eqref{3.15}, implies that
\begin{eqnarray}\label{3.16}
T^2&&\ls t_0^2+T\int_0^{|\boz|}s^{-1/n'}\phi_V(s)\,ds+\int_0^{|\boz|}s^{-2/n'}\int_0^s\lf[|f-Vu|^\ast(r)\r]^2\,dr\,ds.
\end{eqnarray}
Moreover, by Lemma \ref{l2.2}, we find that, for any $s\in(0,|\boz|)$,
\begin{eqnarray*}
\int_0^s[\phi_V^\ast(r)]^2\,dr\le\int_0^s[|f-Vu|^\ast(r)]^2\,dr,
\end{eqnarray*}
which, combined with Lemma \ref{l2.3}, implies that
\begin{eqnarray}\label{3.17}
\int_0^{|\boz|}s^{-1/n'}\phi_V(s)\,ds\ls\|f-Vu\|_{L^{n,\,1}(\boz)}.
\end{eqnarray}
Furthermore, from Lemma \ref{l2.4}, we deduce that
$$\int_0^{|\boz|}s^{-2/n'}\int_0^s\lf[|f-Vu|^\ast(r)\r]^2\,dr\,ds\ls\|f-Vu\|^2_{L^{n,\,1}(\boz)},
$$
which, together with \eqref{3.16} and \eqref{3.17}, further implies that
\begin{eqnarray*}
T^2\ls t^2_0+T\|f-Vu\|_{L^{n,\,1}(\boz)}+\|f-Vu\|^2_{L^{n,\,1}(\boz)}.
\end{eqnarray*}
By this and H\"older's inequality, we conclude that
\begin{eqnarray}\label{3.18}
T\ls t_0+\|f-Vu\|_{L^{n,\,1}(\boz)}.
\end{eqnarray}
Moreover, from the equality
$$
\int_{\boz}|\nabla u(x)|^2\,dx+\int_{\boz}V(x)|u(x)|^2\,dx=\int_\boz f(x)u(x)\,dx,
$$
H\"older's inequality \eqref{1.13} and Sobolev's inequality, it follows that
\begin{eqnarray*}
\int_\boz|\nabla u(x)|^2\,dx&&\ls\|f-Vu\|_{L^{n,\,1}(\boz)}\|u\|_{L^{n',\,\fz}(\boz)}
\ls\|f-Vu\|_{L^{n,\,1}(\boz)}\|u\|_{L^{n'}(\boz)}\\
&&\ls\|f-Vu\|_{L^{n,\,1}(\boz)}\|\nabla u\|_{L^1(\boz)},
\end{eqnarray*}
which implies that $\|\nabla u\|_{L^2(\boz)}\ls\|f-Vu\|_{L^{n,\,1}(\boz)}$.
By this, we find that
$$
\|f-Vu\|^2_{L^{n,\,1}(\boz)}\gs\|\nabla u\|^2_{L^2(\boz)}\gs\int_{\{|\nabla u|\ge t_0\}}|\nabla u(x)|^2\,dx\gs t_0^2|\boz|,
$$
which, combined with \eqref{3.18}, further implies that $T\ls\|f-Vu\|_{L^{n,\,1}(\boz)}$.
Letting $T\to\|\nabla u\|_{L^\fz(\boz)}$, we see that \eqref{3.5} holds true.

\textbf{Step 4.} From Step 3, we deduce that
\begin{eqnarray*}
\|\nabla u\|_{L^\fz(\boz)}<\fz,
\end{eqnarray*}
which further implies that $u\in W^{1,\,\fz}_0(\boz)\cap W^{2,\,2}(\boz)$.
Then there exists a sequence $\{u_k\}_{k\in\nn}
\subset C^\fz(\boz)\cap C^2(\overline{\boz})$ satisfying \eqref{3.1}, \eqref{3.2} and that,
for any $k\in\nn$, $u_k=0$ on $\partial\boz$, and
\begin{eqnarray*}
\|\nabla u_k\|_{L^\fz(\boz)}\to\|\nabla u\|_{L^\fz(\boz)} \ \text{as}\ k\to\fz.
\end{eqnarray*}
Notice that \eqref{2.7} holds true for $u_k$. Then an argument similar to that in Step 2
yields that, for almost every $t\in(0,\fz)$,
\begin{eqnarray}\label{3.19}
&&t\int_{\{|\nabla u|=t\}}|\nabla|\nabla u(x)||\,d\ch^{n-1}\\ \nonumber
&&\hs\le t\int_{\{|\nabla u|=t\}}|f(x)-V(x)u(x)|\,d\ch^{n-1}+\int_{\{|\nabla u|>t\}}|f(x)-V(x)u(x)|^2\,dx\\ \nonumber
&&\hs\hs+\|\nabla u\|_{L^\fz(\boz)}^2\int_{\partial\boz\cap\partial\{|\nabla u|>t\}}|\mathrm{tr}\cb(x)|\,d\ch^{n-1}.
\end{eqnarray}
By replacing \eqref{3.4} with \eqref{3.19} and repeating the proof of \eqref{3.5},
we conclude that there exists a positive constant $C$, depending on $n$, $\|\mathrm{tr}\cb\|_{L^{n-1,\,1}(\partial\boz)}$,
$|\boz|$ and the constant in \eqref{2.2}, such that
\begin{eqnarray}\label{3.20}
\|\nabla u\|_{L^\fz(\boz)}\le C\|f-Vu\|_{L^{n,\,1}(\boz)}.
\end{eqnarray}

\textbf{Step 5.} In this step, we prove that
\begin{eqnarray}\label{3.21}
\|\nabla u\|_{L^\fz(\boz)}\le C\|f\|_{L^{n,\,1}(\boz)},
\end{eqnarray}
where $C$ is a positive constant depending on $n$, $\|\mathrm{tr}\cb\|_{L^{n-1,\,1}(\partial\boz)}$,
$|\boz|$ and the constant in \eqref{2.2}.

Denote by $G_\boz$ and $\Gamma$ the Green function of the operator $L_\boz:=-\Delta+V$, with the Dirichlet boundary condition,
on $\boz$ and the fundamental solution of $L_\rn:=-\Delta+V$ on $\rn$.
Let $p\in(1,q_+)$, with $q_+$ as in \eqref{1.1}, and $g\in L^p(\boz)$.
Denote by $\wz{g}$ the \emph{zero extension} of $g$ on $\rn$.
Let $\wz{g}^+:=\max\{g,\,0\}$ and $\wz{g}^-:=-\min\{g,\,0\}$. Then $\wz{g}=\wz{g}^+-\wz{g}^-$ and
$\|\wz{g}^+\|_{L^p(\rn)}+\|\wz{g}^-\|_{L^p(\rn)}\le2\|\wz{g}\|_{L^p(\rn)}$.
From \cite[Theorem 3.1]{sh95}, we deduce that
\begin{eqnarray*}
\|VL^{-1}_\rn(\wz{g})\|_{L^p(\rn)}\ls\|\wz{g}\|_{L^p(\rn)}.
\end{eqnarray*}
By this, $V\ge0$ and the fact that, for all $x,\,y\in\boz$, $0\le G_\boz(x,y)\le \Gamma(x,y)$,
which is a simple corollary of the classical maximum principle,
we further conclude that
\begin{eqnarray}\label{3.22}
\lf\|VL^{-1}_\boz(g)\r\|_{L^p(\boz)}&&\le
\lf\|VL^{-1}_\boz(g^{+})\r\|_{L^p(\boz)}+\lf\|VL^{-1}_\boz(g^{-})\r\|_{L^p(\boz)}\\ \nonumber
&&\le\lf\|VL^{-1}_\rn(\wz{g}^{+})\r\|_{L^p(\rn)}+\lf\|VL^{-1}_\rn(\wz{g}^{-})\r\|_{L^p(\rn)}\\ \nonumber
&&\ls\lf\|\wz{g}^{+}\r\|_{L^p(\rn)}+\lf\|\wz{g}^{-}\r\|_{L^p(\rn)}\ls\lf\|\wz{g}\r\|_{L^p(\rn)}\sim\|g\|_{L^p(\boz)}.
\end{eqnarray}
Thus, the operator $VL^{-1}_\boz$ is bounded on $L^p(\boz)$ with $p\in(1,q_+)$.

From $V\in RH_n(\rn)$ and the self-improvement property of $RH_n(\rn)$, it follows that $q_+>n$.
Take $q_1,\,q_2\in(1,q_+)$ such that $q_1<n<q_2$. For any measurable set $E\subset\boz$,
by \eqref{3.22}, we find that $\|VL^{-1}_\boz(\chi_E)\|_{L^{q_i}(\boz)}\ls\|\chi_E\|_{L^{q_i}(\boz)}$, with $i\in\{1,\,2\}$,
which further implies that
$$\|VL^{-1}_\boz(\chi_E)\|_{L^{q_i,\,\fz}(\boz)}\ls|E|^{1/q_i},
$$
where $i\in\{1,\,2\}$. From this and Lemma \ref{l3.1}, we deduce that, for any $q\in(q_1,q_2)$ and $r\in(0,\fz)$,
the operator $VL^{-1}_\boz$ is bounded on the space $L^{q,\,r}(\boz)$.
In particular, $VL^{-1}_\boz$ is bounded on $L^{n,\,1}(\boz)$,
which, combined with $u=L^{-1}_\boz f$, implies that
\begin{eqnarray}\label{3.23}
\|Vu\|_{L^{n,\,1}(\boz)}\ls\|f\|_{L^{n,\,1}(\boz)}.
\end{eqnarray}
By this and \eqref{3.20}, we conclude that \eqref{3.21} holds true.

\textbf{Step 6.} In this step, we remove the assumption $\partial\boz\in C^\fz$ for $\boz$.

From the fact that, for any open set $U\in\rr^{n-1}$, the space $C^\fz(U)\cap W^2L^{n-1,\,1}(U)$ is dense in $W^2L^{n-1,\,1}(U)$,
it follows that there exists a sequence $\{\boz_m\}_{m\in\nn}$ of bounded domains such that, for all $m\in\nn$,
$\boz\subset\boz_m$ and $\partial\boz_m\in C^\fz$, $|\boz_m\setminus\boz|\to0$ and $\boz_m\to\boz$ with respect to the Hausdorff
distance, as $m\to\fz$, and $\|\mathrm{tr}\cb_m\|_{L^{n-1,\,1}(\partial\boz)}\le C$ for some positive constant $C$ depending on $\boz$,
where $\mathrm{tr}\cb_m$ denotes the trace of the second fundamental form on $\partial\boz_m$.
We remark that the sequence $\{\boz_m\}_{m\in\nn}$ could be chosen satisfying that the constant
in \eqref{2.2}, with $\boz$ replaced with $\boz_m$,
are bounded, up to a multiplicative constant independent of $m$, by the corresponding constant for $\boz$.
In fact, as the argument in \cite[p.\,170]{cm14a}, we know that, for any domain $U\subset\rr^{n-1}$, the embedding
$W^2L^{n-1,\,1}(U)\to W^{1,\,\fz}(U)$ further implies that the convergence of the Lipschitz
constants of the functions whose graphs locally agree with $\partial\boz_m$
to the Lipschitz constant of the function whose graph coincides with $\partial\boz$.

For any $m\in\nn$, let
$$f_m:=\begin{cases}f& \ \text{in}\ \boz,\\
0&\ \text{in}\ \boz_m\setminus\boz.
\end{cases}
$$
Denote by $u_m$ the weak solution of the problem \eqref{1.2} with $\boz$ and $f$
replaced with $\boz_m$ and $f_m$, respectively.
Then for any $m\in\nn$, $u_m\in W^{2,\,2}_{\loc}(\boz)$ and
from Remark \ref{r1.1}(ii) and \eqref{3.1}, we deduce that, for every open set $\boz_0$ with $\overline{\boz}_0\subset\boz$,
there exists a positive constant $C$, depending on $\boz$, $f$ and $V$, such that, for all $m\in\nn$,
\begin{eqnarray}\label{3.24}
\|u_m\|_{W^{2,\,2}(\boz_0)}\le C.
\end{eqnarray}
Moreover, by \eqref{3.21} with $\boz$ replaced with $\boz_m$ and $u$ replaced with $u_m$, we know that there exists
a positive constant $C$ such that, for all $m\in\nn$,
\begin{eqnarray}\label{3.25}
\|\nabla u_m\|_{L^\fz(\boz)}\le C.
\end{eqnarray}

Let $s\in[1,2n/(n-2))$. If $\partial\boz_0$ is smooth, then the embedding $W^{2,\,2}(\boz_0)\to W^{1,\,s}(\boz_0)$
is compact. From \eqref{3.24} and \eqref{3.25}, we deduce that there exists $u\in W^{1,\,\fz}(\boz)$ and a subsequence
of $\{u_m\}_{m\in\nn}$, still denote by $\{u_m\}_{m\in\nn}$, such that
$$u_m\to u\ \ \text{in}\ \ W^{1,\,s}_{\loc}(\boz)
$$
and
\begin{eqnarray}\label{3.26}
\nabla u_m\to\nabla u\ \text{almost everywhere in}\ \boz.
\end{eqnarray}
By $u_m=0$ on $\partial\boz_m$, $\boz_m\to\boz$ in the Hausdorff distance and \eqref{3.25},
we know that $u=0$ on $\partial\boz$. Thus, $u\in W^{1,\,2}_0(\boz)$.
From the facts that $u_m$ is the weak solution of the Dirichlet problem \eqref{1.2} and $\boz\subset\boz_m$ for any $m\in\nn$,
it follows that, for any $\psi\in C^\fz_c(\boz)$,
\begin{eqnarray}\label{3.27}
\int_{\boz_m}\nabla u_m(x)\cdot\nabla\psi(x)\,dx+\int_{\boz_m}V(x)u_m(x)\psi(x)\,dx=\int_{\boz_m}f(x)\psi(x)\,dx.
\end{eqnarray}
Letting $m\to\fz$ in \eqref{3.27}, via \eqref{3.25}, \eqref{3.26} and the dominated convergence theorem,
we conclude that, for any $\psi\in C^\fz_c(\boz)$,
\begin{eqnarray}\label{3.28}
\int_{\boz}\nabla u(x)\cdot\nabla\psi(x)\,dx+\int_{\boz}V(x)u(x)\psi(x)\,dx=\int_{\boz}f(x)\psi(x)\,dx,
\end{eqnarray}
which, together with the fact that $C^\fz_c(\boz)$ is dense in $W^{1,\,2}_0(\boz)$, implies that
$u$ is the weak solution of the problem \eqref{1.2}.
Moreover, it follows, from \eqref{3.26} and the fact for any $m\in\nn$, $u_m$ satisfies \eqref{3.21},
that  $\|\nabla u\|_{L^\fz(\boz)}\ls\|f\|_{L^{n,\,1}(\boz)}$,
which finishes the proof of Theorem \ref{t1.1}.
\end{proof}

To prove Theorem \ref{t1.2}, we need the following Lemma \ref{l3.2}, whose proof is similar to that
of \cite[Lemma 6.4]{m01}, the details being omitted here.

\begin{lemma}\label{l3.2}
Let $n\ge2$ and $\boz$ be a bounded semi-convex domain in $\rn$. Then there exists
a sequence of bounded semi-convex domain $\{\boz_j\}_{j\in\nn}$
such that $\overline{\boz}\subset\boz_j$ and
$\partial\boz_j\in C^\fz$ for all $j\in\nn$, $|\boz_j\setminus\boz|\to0$ and $\boz_j\to\boz$ with
respect to the Hausdorff distance, as $j\to\fz$. Moreover, for any $j\in\nn$,
$\boz_j$ has the uniform Lipschitz character and the uniform ball constant with $\boz$.
\end{lemma}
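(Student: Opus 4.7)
My plan is to construct $\boz_j$ by mollifying the local defining functions of $\partial\boz$ and then shifting each smoothed graph slightly outward to guarantee $\overline{\boz}\subset\boz_j$. First I would cover $\partial\boz$ by finitely many coordinate cylinders $\{\mathcal{C}_i\}_{i=1}^N$ as in Definition \ref{d1.1}; within each $\mathcal{C}_i$ the boundary is the graph of a semi-convex function $\fai_i$ on an affine hyperplane $H_i$, and $\boz\cap\mathcal{C}_i$ coincides with $\{t>\fai_i(\wz x)\}$. Fix a standard mollifier $\rho_\epz$ and set $\fai_i^{(j)}:=\fai_i\ast\rho_{1/j}$ on a slightly shrunken slice of $H_i$.

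The central observation I would exploit is that mollification preserves semi-convexity with the same constant: the pointwise inequality
\[
2\fai_i(\wz x)-\fai_i(\wz x+h)-\fai_i(\wz x-h)\le C|h|^2
\]
is stable under convex combinations, so convolution against a non-negative kernel yields the same inequality for $\fai_i^{(j)}$. In particular the Lipschitz and semi-convexity constants of $\fai_i$ transfer to $\fai_i^{(j)}$ independently of $j$. Next I would set $\psi_i^{(j)}:=\fai_i^{(j)}-\epz_j$ with $\epz_j:=\max_i\|\fai_i^{(j)}-\fai_i\|_{L^\fz}+1/j$, so that $\psi_i^{(j)}<\fai_i$ pointwise on the relevant slice, and then patch the resulting smooth graphs together through a partition of unity adapted to the covering, prescribing the interior to coincide with $\boz$ enlarged by a tubular neighborhood of width $\epz_j$. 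This produces a bounded open set $\boz_j$ with $\partial\boz_j\in C^\fz$ and $\overline{\boz}\subset\boz_j$; the uniform convergence $\fai_i^{(j)}\to\fai_i$ on compact sets together with $\epz_j\to0$ gives both $|\boz_j\setminus\boz|\to0$ and Hausdorff convergence.

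Two uniformity assertions then remain, and this is where the real content lies. The Lipschitz character is the easy one: $\|\nabla\psi_i^{(j)}\|_{L^\fz}\le\|\nabla\fai_i\|_{L^\fz}$ since mollification cannot increase the supremum of the gradient, so the Lipschitz constants of the local defining functions are uniformly bounded. The harder assertion, and the main obstacle I expect, is that $\boz_j$ satisfies a uniform exterior ball condition with radius independent of $j$. By Remark \ref{r1.3}(ii), semi-convexity of a Lipschitz domain is equivalent to a UEBC, and I would invoke the \emph{quantitative} form of this equivalence from \cite{mmmy09,mmmy10}: the UEBC radius at each boundary point is controlled from below by a positive constant depending only on the Lipschitz and semi-convexity constants of the local defining function. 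Since both constants for $\fai_i^{(j)}$ are bounded by the corresponding constants for $\fai_i$ uniformly in $j$, the UEBC radius for $\partial\boz_j$ is bounded below by a positive constant depending only on $\boz$, which is exactly the uniformity in the statement.

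The delicate point is making this last quantitative link explicit, since the qualitative statement from \cite{mmmy09} is not by itself sufficient. A more hands-on alternative would be to verify the UEBC for $\boz_j$ directly: at each $x_0\in\partial\boz_j$ lying on the graph of $\psi_i^{(j)}$, one writes the exterior ball candidate along the outward normal at $x_0$ and checks that a ball of radius proportional to $1/C$, where $C$ is the common semi-convexity constant, sits outside $\boz_j$. This reduces to the fact that $\fai_i^{(j)}+\tfrac{C}{2}|\cdot|^2$ is convex, which follows from the definition of semi-convexity. Either route yields the required uniform ball constant and completes the proof.
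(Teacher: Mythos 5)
Your two key observations are sound: convolution with a nonnegative unit-mass kernel preserves both the Lipschitz constant and the semi-convexity constant of each local defining function, and semi-convexity with constant $C$ does yield, quantitatively, an exterior ball of radius bounded below in terms of $C$ and the Lipschitz constant (this is the effective content of \cite{mmmy09,mmmy10}, and your ``hands-on'' verification via convexity of $\fai_i^{(j)}+\frac{C}{2}|\cdot|^2$ is the right way to make it explicit). Note that the paper itself omits the proof entirely, deferring to \cite[Lemma 6.4]{m01}; that argument is a \emph{global} construction for (semi-)convex domains rather than a localize--mollify--patch scheme, so your route is genuinely different in structure.

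The genuine gap is the patching step. Gluing the smoothed graphs $\psi_i^{(j)}$ by a partition of unity is exactly where both conclusions you need can fail, and the proposal does not address it. First, smoothness of $\paz\boz_j$: on an overlap $\mathcal{C}_i\cap\mathcal{C}_{i'}$ the two graphs live over \emph{different} hyperplanes $H_i$, $H_{i'}$, so the blended boundary is most naturally described as a level set of a globally assembled function, and one must check that its gradient does not vanish there; this requires the cutoff functions to vary slowly relative to the discrepancy between the two local descriptions, which is not automatic. Second, and more seriously, semi-convexity is not stable under partition-of-unity blending: a convex combination $\sum_i\theta_i(\wz x)\psi_i^{(j)}(\wz x)$ with $\wz x$-dependent weights picks up second-difference contributions involving $\nabla\theta_i\cdot\nabla(\psi_i^{(j)}-\psi_{i'}^{(j)})$ and $\nabla^2\theta_i\,(\psi_i^{(j)}-\psi_{i'}^{(j)})$, and moreover rewriting a graph over $H_{i'}$ as a graph over $H_i$ changes the semi-convexity constant in a way controlled only through the Lipschitz character. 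Without estimating these terms (or replacing the patching by a global construction, e.g.\ sublevel sets of a regularized distance function or the argument of \cite[Lemma 6.4]{m01}), the claim that $\boz_j$ is semi-convex with ball constant uniform in $j$ is not established. A smaller point: $\fai_i^{(j)}$ is only defined on a shrunken slice of $H_i$, so you must also verify that the (fixed, finite) cylinder cover still covers $\paz\boz_j$ for all large $j$.
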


Now we prove Theorem \ref{t1.2} by using Lemma \ref{l3.2}.

\begin{proof}[Proof of Theorem \ref{t1.2}]
The proof of Theorem \ref{t1.2} is similar that of Theorem \ref{t1.1}.
More precisely, we repeat the proof of Theorem \ref{t1.1} by replacing
\eqref{2.7} and \eqref{2.8} with \eqref{2.9} and \eqref{2.10}, respectively.
Moreover, from Lemma \ref{l3.2}, we deduce that there exists a sequence $\{\boz_m\}_{m\in\nn}$
of bounded, semi-convex and smooth domains such that $|\boz_m\setminus\boz|\to0$ and $\boz_m\to\boz$
with respect to the Hausdorff distance as $m\to\fz$, which is employed in Step 6.
\end{proof}

To prove Theorem \ref{t1.3}, we need the notion of Neumann functions and some associated
estimates. Let $n\ge3$ and $\boz\subset\rn$ be a bounded Lipschitz domain.
By \cite[(1.20)]{sh94}, we know that there exists a function $N(\cdot,\cdot)$ on $\boz\times\boz$
such that, for any $x,\,y\in\boz$,
$$\begin{cases}
-\Delta_xN(x,y)+V(x)N(x,y)=\delta_y(x),\ &\text{in}\ \boz,\\
\frac{\partial}{\partial\nu}N(\cdot,y)=0,\ &\text{on}\ \partial\boz.
\end{cases}
$$
The function $N$ is called the \emph{Neumann function}.
Moreover, for the potential $V\in RH_q(\rn)$, with $q\in[n/2,\fz)$,
the \emph{auxiliary function} $m(x,V)$ associated with $V$ is defined by, for all $x\in\rn$,
\begin{equation*}
[m(x,V)]^{-1}:=\sup\lf\{r\in(0,\fz):\ \frac{r^2}{|B(x,r)|}\int_{B(x,r)}V(y)\,dy\le1\r\},
\end{equation*}
which was introduced by Shen \cite{sh95}. For the Neumann function $N$, we have the following estimate,
which was established by Shen \cite[Lemma 1.21]{sh94}.

\begin{lemma}\label{l3.3}
Let $n\ge3$ and $\boz\subset\rn$ be a bounded Lipschitz domain. Then for any $k\in\nn$, there exists a
positive constant $C$, depending on $n$ and $k$, such that, for any $x,\,y\in\boz$,
$$|N(x,y)|\le\frac{C}{[1+|x-y|m(x,V)]^k}\frac{1}{|x-y|^{n-2}}.
$$
\end{lemma}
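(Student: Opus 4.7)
The plan is to establish the bound in two stages: first the baseline case $k=0$, namely $|N(x,y)|\le C|x-y|^{2-n}$, and then the polynomial decay factor $(1+|x-y|m(x,V))^{-k}$ by induction on $k\in\nn$.

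For the baseline, I would exploit that $V\ge 0$: the function $N(\cdot,y)$ is a non-negative subsolution of $-\Delta u=0$ on $\boz\setminus\{y\}$ with $\partial u/\partial\nu=0$ on $\partial\boz$. On a bounded Lipschitz domain, the Neumann Green function $N_0$ of $-\Delta$ (normalised modulo an additive constant) is classically known to obey $N_0(x,y)\ls|x-y|^{2-n}$. A comparison argument, combined with Moser's mean-value inequality in which the potential term is absorbed at small scales via the reverse-H\"older control $\int_{B(x,r)}V\,dz\ls r^{n-2}$ valid when $r\ls 1/m(x,V)$, then yields the case $k=0$.

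For the inductive step, I would use two structural properties of the auxiliary function $m$ due to Shen: the slow-variation estimate $m(x,V)\sim m(z,V)$ whenever $|x-z|\ls 1/m(x,V)$, and the normalisation $r^{2}|B(x,r)|^{-1}\int_{B(x,r)}V\,dz\le 1$ at $r=1/m(x,V)$. Assuming the bound holds with exponent $k$, one writes the resolvent-type identity
$$N(x,y)=N_0(x,y)-\int_\boz N_0(x,z)V(z)N(z,y)\,dz,$$
decomposes the integral over dyadic annuli $\{|z-x|\sim 2^j/m(x,V)\}$, applies the induction hypothesis on each annulus, and sums the resulting geometric series using the reverse-H\"older control on $V$. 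The gain is an extra factor of $(1+|x-y|m(x,V))^{-1}$ per step, produced by the suppression that $V$ exerts on $N$ beyond the scale $1/m(x,V)$.

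The main obstacle will be the interaction between the Neumann boundary condition and the Lipschitz geometry of $\partial\boz$. Unlike the Dirichlet analogue treated in \cite{sh95}, one cannot zero-extend $N(\cdot,y)$ across $\partial\boz$ to reduce to whole-space estimates, so the resolvent identity above must be interpreted with respect to the Neumann Green function $N_0$ of $-\Delta$ on $\boz$, whose pointwise behaviour up to a merely Lipschitz $\partial\boz$ requires careful treatment. This is precisely where the layer potential estimates on Lipschitz domains developed in \cite{sh94}, together with the reverse-H\"older self-improvement of $V$, come in to absorb the boundary contributions at each step of the iteration.
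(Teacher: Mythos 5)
First, a point of reference: the paper gives no proof of this lemma at all --- it is quoted verbatim from Shen \cite[Lemma 1.21]{sh94} --- so your proposal has to be measured against Shen's argument rather than against anything in the present text. Your baseline step ($k=0$, i.e. $|N(x,y)|\ls|x-y|^{2-n}$ via subharmonicity of $N(\cdot,y)$ and comparison with the Neumann function of the Laplacian) is essentially in the right spirit, although already there the object $N_0$ with $-\Delta_xN_0(\cdot,y)=\delta_y$ and $\partial N_0/\partial\nu=0$ does not exist on a bounded domain (the compatibility condition $\int_{\partial\boz}\partial N_0/\partial\nu=\int_\boz\delta_y$ fails), so $N_0$ must be normalised by $-\Delta_xN_0(\cdot,y)=\delta_y-|\boz|^{-1}$, and your resolvent identity then acquires nonlocal correction terms (involving $|\boz|^{-1}\int_\boz V(z)N(z,y)\,dz$ and the constant mode) that you do not address.

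The more serious gap is in the inductive step. The gain of a factor $(1+|x-y|m(x,V))^{-1}$ per iteration is asserted, not derived: on the dyadic annuli $|z-x|\sim 2^j/m(x,V)$ with $j\ge0$, Shen's own estimates for $V\in RH_{n/2}$ give $R^{2-n}\int_{B(x,R)}V\gtrsim(Rm(x,V))^{\delta}$ for $R\ge 1/m(x,V)$ and some $\delta>0$, i.e. the potential integrals \emph{grow} with scale, so the Neumann series exhibits no smallness at large scales and the ``geometric series'' does not converge with a gain. The ``suppression that $V$ exerts on $N$ beyond scale $1/m(x,V)$'' is precisely the content of the lemma and cannot be invoked as its own source. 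Shen's actual mechanism is different: one first proves a decay estimate of the form $\sup_{B(x_0,R/2)\cap\boz}|u|\le C_k(1+Rm(x_0,V))^{-k}\bigl(R^{-n}\int_{B(x_0,R)\cap\boz}|u|^2\bigr)^{1/2}$ for local solutions of $-\Delta u+Vu=0$ (up to the boundary, the Neumann condition killing the boundary term in the Caccioppoli inequality), by iterating Caccioppoli against the Fefferman--Phong inequality $[m(\cdot,V)]^2\int|u|^2\ls\int(|\nabla u|^2+V|u|^2)$; the lemma then follows by applying this with $R\sim|x-y|$ to $u=N(\cdot,y)$ together with the crude bound from your first step. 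Without this Fefferman--Phong input your induction does not close.
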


\begin{proof}[Proof of Theorem \ref{t1.3}]
The proof of Theorem \ref{t1.3} is similar to that of Theorem \ref{t1.1}
for the Dirichlet case. Here we point out some slight variants.

\textbf{Step 1.} Assume that $\partial\boz\in C^\fz$ and $u$
is the weak solution of the Neumann problem \eqref{1.4}. Then from
$f\in L^{n,\,1}(\boz)\subset L^2(\boz)$ and Remark \ref{r1.1}(iii),
we deduce that $u\in W^{2,\,2}(\boz)$. Moreover,
by \cite[(6.95)]{cm14a}, we know that there exists a
sequence $\{u_k\}_{k\in\nn}\subset C^\fz(\boz)
\cap C^2(\overline{\boz})$ such that, for any $k\in\nn$,
$\frac{\partial u_k}{\partial\nu}=0$ on $\partial\boz$,
\begin{eqnarray}\label{3.29}
u_k\to u\ \ \text{in}\ W^{2,\,2}(\boz) \ \ \text{and}\ \
\nabla u_k\to\nabla u\ \text{almost everywhere in}\ \boz.
\end{eqnarray}
Via replacing \eqref{3.1} with \eqref{3.29}, then an argument, similar to that in Step 1
in the proof of Theorem \ref{t1.1}, yields that the sequence $\{u_k\}_{k\in\nn}$ satisfies \eqref{3.2}.

\textbf{Step 2.}  By replacing \eqref{2.8} with \eqref{2.24} and repeating the proof of Step 2 in
the proof of Theorem \ref{t1.1}, we conclude that, for almost every $t\in(t_u,\fz)$,
\begin{eqnarray}\label{3.30}
&&t\int_{\{|\nabla u|=t\}}|\nabla|\nabla u(x)||\,d\ch^{n-1}\\ \nonumber
&&\hs\le t\int_{\{|\nabla u|=t\}}|f(x)-V(x)u(x)|\,d\ch^{n-1}+\int_{\{|\nabla u|>t\}}|f(x)-V(x)u(x)|^2\,dx\\ \nonumber
&&\hs\hs+2t^2\int_{\partial\boz\cap\partial\{|\nabla u|>t\}}|\wz{\cb}(x)|\,d\ch^{n-1}.
\end{eqnarray}

\textbf{Step 3.}  Via replacing $|\mathrm{tr}\cb(x)|$ and \eqref{3.4} with $|\wz{\cb}(x)|$ and
\eqref{3.30} respectively, then an argument similar to that in Step 3 in
the proof of Theorem \ref{t1.1}, yields that, for any given $r\in(n-1,\fz)$,
\begin{eqnarray}\label{3.31}
\|\nabla u\|_{L^\fz(\boz)}\ls\|f-Vu\|_{L^{n,\,1}(\boz)},
\end{eqnarray}
where the implicit constant depends on $n$, $r$, $\||\wz{\cb}|\|_{L^r(\partial\boz)}$,
$|\boz|$ and the constant in \eqref{2.2}.

\textbf{Step 4.} Replacing \eqref{2.7} with \eqref{2.23} and repeating the proof of Step 4 in
the proof of Theorem \ref{t1.1}, we find that \eqref{3.31} holds true with
the implicit constant depending on $n$, $\||\wz{\cb}|\|_{L^{n-1,\,1}(\partial\boz)}$, $|\boz|$
and the constant in \eqref{2.2}.

\textbf{Step 5.} In this step, we prove that
\begin{eqnarray}\label{3.32}
\|\nabla u\|_{L^\fz(\boz)}\ls\|f\|_{L^{n,\,1}(\boz)},
\end{eqnarray}
where the implicit constant depends on $n$, $\||\wz{\cb}|\|_{L^{n-1,\,1}(\partial\boz)}$, $|\boz|$
and the constant in \eqref{2.2}.
Let $L_\boz:=-\Delta+V$ with the Neumann boundary condition and $N$ be the Neumann function associated with $L_\boz$.
Assume that $p\in[1,q_+)$ and $g\in L^p(\boz)$. We claim that
\begin{eqnarray}\label{3.33}
\|VL_\boz^{-1}(g)\|_{L^p(\boz)}\ls\|g\|_{L^p(\boz)}.
\end{eqnarray}
Once \eqref{3.33} holds true, then by \eqref{3.33}, Lemma \ref{l3.1} and $u=L^{-1}_\boz f$,
similar to the proof of \eqref{3.23}, we
conclude that $\|Vu\|_{L^{n,\,1}(\boz)}\ls\|f\|_{L^{n,\,1}(\boz)}$,
which, combined with \eqref{3.31}, further implies that \eqref{3.32} holds true.

Now we give out the proof of \eqref{3.33}. For any $x\in\boz$, let $w(x):=\int_\boz N(x,y)g(y)\,dy$.
Then the inequality \eqref{3.33} is equivalent to
\begin{eqnarray}\label{3.34}
\|Vw\|_{L^p(\boz)}\ls\|g\|_{L^p(\boz)}.
\end{eqnarray}
For any $x,\,y\in\boz$, let
$$\widetilde{N}(x,y):=\begin{cases} N(x,y),\ &\ x,\,y\in\boz,\\
0,\ &\ \text{else},
\end{cases}
\quad
\widetilde{g}(x):=\begin{cases} g(x),\ &\ x\in\boz,\\
0,\ &\ x\in\rn\setminus\boz,
\end{cases}
$$
and $\widetilde{w}(x):=\int_{\rn}\widetilde{N}(x,y)\widetilde{g}(y)\,dy$. Then
$\widetilde{g}\in L^p(\rn)$ and from Lemma \ref{l3.3}, it follows that, for
any $k\in\nn$ and $x,\,y\in\rn$,
$$|\widetilde{N}(x,y)|\ls\frac{1}{[1+|x-y|m(x,V)]^k}\frac{1}{|x-y|^{n-2}}.
$$
Via using these estimates and
repeating the proof of \cite[Theorem 3.1]{sh95}, we obtain that
$\|V\widetilde{w}\|_{L^p(\rn)}\ls\|\widetilde{g}\|_{L^p(\rn)}$,
which implies that \eqref{3.34} holds true.

\textbf{Step 6.} Let $\{\boz_m\}_{m\in\nn}$ be as in Step 6 in the proof of Theorem \ref{t1.1}.
Then we obtain a corresponding sequence $\{u_m\}_{m\in\nn}$ of solution to the Neumann problems
in $\{\boz_m\}_{m\in\nn}$, which satisfy \eqref{3.24}, \eqref{3.25}, \eqref{3.26} and \eqref{3.27}
for all $\psi\in\mathrm{Lip}(\rn)$. By \eqref{3.27} and letting $m\to\fz$, we
see that \eqref{3.28} holds true for any $\psi\in\mathrm{Lip}(\rn)$,
which, together with the facts that $\boz$ is a bounded Lipschitz domain and
the space of the restrictions to $\boz$ of the functions from $\mathrm{Lip}(\rn)$
is dense in $W^{1,\,2}(\boz)$, implies that $u$ is the weak solution of the Neumann problem \eqref{1.4}.
Moreover, it follows, from \eqref{3.26} and the fact for any $m\in\nn$, $u_m$ satisfies \eqref{3.32},
that  $\|\nabla u\|_{L^\fz(\boz)}\ls\|f\|_{L^{n,\,1}(\boz)}$,
which finishes the proof of Theorem \ref{t1.3}.
 \end{proof}

Finally we prove Theorem \ref{t1.4}.

\begin{proof}[Proof of Theorem \ref{t1.4}]
The proof of Theorem \ref{t1.4} is similar that of Theorem \ref{t1.3}.
More precisely, by using \eqref{2.25}, \eqref{2.26} and Lemma \ref{l3.2},
and repeating the proof of Theorem \ref{t1.3}, we finish the proof of Theorem \ref{t1.4},
the details being omitted here.
\end{proof}

\smallskip

{\bf Acknowledgement.} The author would like to thank Professor Jun Geng for helpful
discussions on this topic.

\bigskip

\noindent Sibei Yang

\medskip

\noindent School of Mathematics and Statistics, Gansu Key Laboratory of Applied Mathematics and
Complex Systems, Lanzhou University, Lanzhou, Gansu 730000, People's Republic of China

\smallskip

\noindent{\it E-mail:} \texttt{yangsb@lzu.edu.cn}

\end{document}